\documentclass[11pt, reqno]{amsart}
\usepackage[utf8]{inputenc}
\usepackage{amsmath,bm}
\usepackage{amsthm}
\usepackage{amsfonts}
\usepackage{MnSymbol}
\usepackage{blindtext,amsmath,amsthm,amsfonts, textcomp, mathrsfs, mathtools}
\usepackage[shortlabels]{enumitem}

\usepackage[top=30mm,bottom=30mm,left=30mm,right=30mm]{geometry}
\newtheorem{theorem}{Theorem}[section]
\newtheorem{conjecture}{Conjecture}

\newtheorem*{theorem*}{Theorem}
\newtheorem*{remark*}{Remark}
\newtheorem*{problem*}{Problem}
\newtheorem*{conjecture*}{Conjecture}
\newtheorem*{question*}{Question}
\newtheorem{lemma}[theorem]{Lemma}
\newtheorem{proposition}[theorem]{Proposition}

\newcommand{\rom}[1]{\uppercase\expandafter{\romannumeral #1\relax}}

\begin{document}

\title[On Euler-Kronecker constants and the generalized Brauer-Siegel conjecture]{On Euler-Kronecker constants and the generalized Brauer-Siegel conjecture}

\author[Anup B. Dixit]{Anup B. Dixit\textsuperscript{1}}
\address{Department of Mathematics and Statistics\\ Queen's University\\ Jeffery Hall, 48 University Ave,\\ Kingston,\\ Canada, ON\\ K7L 3N8}
\email{anup.dixit@queensu.ca}

\date{}

\begin{abstract}
As a natural generalization of the Euler-Mascheroni constant $\gamma$, Ihara \cite{Ihara1} introduced the Euler-Kronecker constant $\gamma_K$ attached to any number field $K$. In this paper, we prove that a certain bound on $\gamma_K$ in a tower of number fields $\mathcal{K}$ implies the generalized Brauer-Siegel conjecture for $\mathcal{K}$ as formulated by Tsfasman and Vl\v{a}du\c{t}. Moreover, we use known bounds on $\gamma_K$ for cyclotomic fields to obtain a finer estimate for the number of zeros of the Dedekind zeta-function $\zeta_K(s)$ in the critical strip.
\end{abstract}

\subjclass[2010]{11R42, 11R18, 11R29}

\keywords{Euler-Kronecker constants, Brauer-Siegel theorem, asymptotically exact families, cyclotomic fields}

\footnotetext[1]{Research of the author was supported by a Coleman Postdoctoral Fellowship at Queen's University.}

\maketitle

\section{\bf Introduction}
\bigskip

The Euler-Mascheroni constant denoted by $\gamma$ is defined as
\begin{equation*}
    \gamma := \lim_{x\to\infty} \left( \sum_{n\leq x} \frac{1}{n} - \log x\right). 
\end{equation*}
This constant $\gamma$ appears in many areas of mathematics. For instance, it is given by the constant term in the Laurent expansion of the Riemann zeta-function,
\begin{equation}\label{zeta-gamma}
    \zeta(s) = \frac{1}{(s-1)} + \gamma + O(s-1).
\end{equation}

Motivated by \eqref{zeta-gamma},  Ihara \cite{Ihara1} introduced a generalization of $\gamma$ to any number field $K$, using the Dedekind zeta-function $\zeta_K(s)$. The Dedekind zeta-function $\zeta_K(s)$ associated to a number field $K$ is defined on the half-plane $\Re(s)>1$ as
\begin{equation*}
    \zeta_K(s) := \sum_{\mathfrak{a} \subset \mathcal{O}_K} \frac{1}{N\mathfrak{a}^s},
\end{equation*}
where $\mathfrak{a}$ runs over all non-zero integral ideals of the ring of integers $\mathcal{O}_K$. The function $\zeta_K(s)$ has an analytic continuation to the whole complex plane except for a simple pole at $s=1$. Thus, the Laurent expansion of $\zeta_K(s)$ near $s=1$ is of the form
\begin{equation*}
    \zeta_K(s) = \frac{c_{-1}}{(s-1)} + c_0 + c_1 (s-1) + \cdots
\end{equation*}
The Euler-Kronecker constant associated to $K$ is defined as
\begin{equation*}
    \gamma_K := \frac{c_{0}}{c_{-1}}.
\end{equation*}

One could also view $\gamma_K$ as the constant term in the logarithmic derivative of $\zeta_K(s)$ at $s=1$, i.e.,
\begin{equation}\label{log-derivative-gamma_K}
    \frac{\zeta_K'}{\zeta_K} (s) = \frac{-1}{(s-1)} +\gamma_K + O(s-1).
\end{equation}

In \cite{Ihara1}, Ihara established the following bounds for $\gamma_K$:
\begin{align}\label{gamma_K-bounds-ihara}
&\gamma_K \leq 2\log\log \sqrt{|d_K|} \hspace{4mm}\text{ (under GRH)}\\
&\gamma_K \geq -\log \sqrt{|d_K|} \hspace{4mm}\text{ (unconditionally)}\nonumber,
\end{align}
where $d_K$ denotes the discriminant of $K$ over $\mathbb{Q}$. Asymptotic bounds on $\gamma_K/\log\sqrt{|d_K}$ were obtained for certain families of number fields by Tsfasman in \cite{Tsfasman} and Zykin in \cite{Zykin1}. 
\medskip

In this paper, we study connections of $\gamma_K$ to two classical problems. The first one is the Brauer-Siegel conjecture, which is a statement about the rate at which the class number times the regulator, $h_K R_K$, vary in a family of number fields. In Section 2, we show that the generalized Brauer-Siegel conjecture is true for a tower of number fields if $|\gamma_K|$ satisfy certain upper bounds in the tower. These bounds are much weaker than what is expected from \eqref{gamma_K-bounds-ihara}. We also establish unconditional upper bounds on $|\gamma_K|$ for almost normal number fields and for those which have a solvable group as the Galois group of its Galois closure. The precise statements are given in Section 2.\\

In Section 3, we prove some results related to the number of zeros of $\zeta_K(s)$ in the critical strip. Denote by $N_K(T)$, the number of zeros of $\zeta_K(s)$ in the region $0<\Re(s)<1$ and $|\Im(s)|<T$. Then, it is known that for $T>2$ ,
\begin{equation*}
    N_K(T) = \frac{T}{\pi} \log \left(|d_K| \left(\frac{T}{2\pi e}\right)^{n_K} \right) + O(n_K\log  T) +O(\log |d_K|),
\end{equation*}
with the implied constants being absolute. Here $n_K$ denotes the degree and $d_K$ the discriminant of $K/\mathbb{Q}$. For a fixed large $T$, we vary $K$ in a family of cyclotomic fields and are interested in the $O(\log |d_K|)$ term in the error. In fact, using known bounds on $\gamma_K$ for almost all cyclotomic fields, we give finer results to the error terms in $N_K(T)$. Although these estimates are weaker than the known estimates for $N_K(T)$ (see Trudgian \cite{Trud}), this illustrates a new approach connecting them to bounds on $\gamma_K$.

\section{\bf The generalized Brauer-Siegel conjecture}
\bigskip

Let $K$ be a number field. Denote by $h_K$ the class number of $K$, $d_K$ the discriminant of $K$ over $\mathbb{Q}$ and $R_K$ the regulator of $K$. It is an important theme in number theory to understand how $h_K$ varies on varying $K$. Suppose $\mathcal{K} = \{K_i\}_{i\in\mathbb{N}}$ is a sequence of number fields. We call $\mathcal{K}$ to be a family if $K_i\neq K_j$ for $i\neq j$. Moreover, we call  $\mathcal{K}$ to be a tower if $K_i\subsetneq K_{i+1}$ for all $i$. A result of Heilbronn \cite{Heil}, which was earlier conjectured by Gauss, states that in a family of imaginary quadratic fields, the class number $h_K$ must tend to infinity. However, the same phenomena is not expected to hold for any general family of number fields. For instance, it is still unknown whether there are infinitely many real quadratic fields with class number $1$, although it is widely believed to be true. One of the difficulties in bounding class number is that it is difficult to isolate it from the regulator of the number field. This was observed by Siegel \cite{Sie} in 1935. He showed that for a family of quadratic fields $\{K_i\}$, the class number times the regulator  $h_{K_i} R_{K_i}$ tends to infinity as $i\to \infty$. Furthermore, he showed that
\begin{equation*}
\lim_{i\to\infty} \frac{\log h_{K_i} R_{K_i}}{\log\sqrt{|d_{K_i}|}} = 1,
\end{equation*}
for a family of quadratic fields $\mathcal{K} = \{K_i\}_{i\in\mathbb{N}}$. Since quadratic fields are determined by their discriminant (more generally, Minkowski's theorem implies that there are finitely many number fields with bounded discriminant), Siegel's result provides a rate at which $h_K R_K$ goes to infinity. Brauer \cite{Brauer} generalized this result to families of number fields, that are Galois over $\mathbb{Q}$. This is known as the classical Brauer-Siegel theorem. More precisely, he showed the following.
\begin{theorem*}[Brauer]
Let $\{K_i\}$ be a family of number fields such that $K_i/\mathbb{Q}$ is Galois for all $i$. Denote by $n_{K_i}$ the degree $[K_i: \mathbb{Q}]$. If 
\begin{equation*}
    \lim_{i\to\infty} |d_{K_i}|^{1/n_{K_i}} = \infty,
\end{equation*} 
then 
\begin{equation}\label{brauer-siegel}
\lim_{i\to\infty} \frac{\log h_{K_i} R_{K_i}}{\log\sqrt{|d_{K_i}|}} = 1.
\end{equation}
Moreover, the condition $K_i/\mathbb{Q}$ being Galois can be dropped under the assumption of generalized Riemann hypothesis (GRH). 
\end{theorem*}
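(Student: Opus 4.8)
The plan is to deduce the result from the analytic class number formula together with an estimate, above and below, for the residue of $\zeta_K(s)$ at $s=1$. Write $\kappa_K$ for that residue (so $\kappa_K=c_{-1}$ in the Laurent expansion above), and recall that
\[
\kappa_K=\frac{2^{r_1}(2\pi)^{r_2}\,h_K R_K}{w_K\,\sqrt{|d_K|}},
\]
where $r_1,r_2$ count the real and complex places of $K$ and $w_K$ the roots of unity in $K$. Taking logarithms and dividing by $\log\sqrt{|d_K|}$ gives
\[
\frac{\log h_K R_K}{\log\sqrt{|d_K|}}=1+\frac{\log\kappa_K}{\log\sqrt{|d_K|}}+\frac{\log w_K-r_1\log 2-r_2\log(2\pi)}{\log\sqrt{|d_K|}}.
\]
Since $r_1+2r_2=n_K$ and $\phi(w_K)\le n_K$ (so $\log w_K=O(\log n_K)$), the last term is $O\!\big(n_K/\log|d_K|\big)$; and the hypothesis $|d_{K_i}|^{1/n_{K_i}}\to\infty$ shows both that $|d_{K_i}|\to\infty$ and that $n_{K_i}/\log|d_{K_i}|=1/\log\big(|d_{K_i}|^{1/n_{K_i}}\big)\to 0$, so that term is $o(1)$. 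Thus the theorem reduces to the assertion $\log\kappa_{K_i}=o\big(\log|d_{K_i}|\big)$, i.e.\ $\kappa_{K_i}=|d_{K_i}|^{o(1)}$ as $i\to\infty$.

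The upper bound $\kappa_K=|d_K|^{o(1)}$ needs no Galois hypothesis. Comparing Euler factors gives $\zeta_K(\sigma)\le\zeta(\sigma)^{n_K}$ for real $\sigma>1$, and feeding this into a standard contour argument — bounding $\zeta_K$ in a thin region just to the left of $\Re(s)=1$ by the convexity bound coming from the functional equation (a possible exceptional real zero only makes $\zeta_K$ smaller there, so it does no harm for an upper bound) — yields an estimate of the shape $\kappa_K=O\big((\log|d_K|)^{O(n_K)}\big)$, hence $\log\kappa_K=O\big(n_K\log\log|d_K|\big)=o\big(\log|d_K|\big)$ by the degree estimate above.

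The substance of the theorem is the matching lower bound $\kappa_K\ge c_\epsilon|d_K|^{-\epsilon}$, and here normality of $K/\mathbb{Q}$ enters. By the Aramata–Brauer theorem $\zeta_K(s)/\zeta(s)$ is entire, with value $\kappa_K$ at $s=1$; applying Brauer's induction theorem to $\mathrm{Gal}(K/\mathbb{Q})$ one factors $\zeta_K$ into a finite product of Hecke $L$-functions attached to ray-class characters $\chi_i$ of intermediate fields $k_i$, with integer exponents $n_i$, the number of factors and the $|n_i|$ being bounded in terms of $n_K$ alone; the trivial-character factors are Dedekind zeta functions of proper subfields, whose residues are handled by induction on the degree. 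Matching everything at $s=1$ expresses $\kappa_K$ as a product of the quantities $L(1,\chi_i)$ (for nontrivial $\chi_i$) to integer powers. Each such factor is bounded above by a power of $\log\big(N\mathfrak{f}_i\,|d_{k_i}|\big)$ by partial summation, and — crucially — bounded below by $c_\epsilon\big(N\mathfrak{f}_i\,|d_{k_i}|\big)^{-\epsilon}$ by Siegel's theorem in its (ineffective) form for Hecke $L$-functions, using that $L(1,\chi_i)\ne 0$. The conductor–discriminant formula bounds $\prod_i N\mathfrak{f}_i$ and each $|d_{k_i}|$ polynomially in $|d_K|$, so combining over the boundedly many factors yields $\kappa_K=|d_K|^{o(1)}$. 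Substituting back into the displayed identity proves \eqref{brauer-siegel}.

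I expect the lower bound on $\kappa_K$ to be the main obstacle: a zero-free region for $\zeta_K$ alone does not suffice because of a potential Siegel (exceptional real) zero, and it is precisely Siegel's ineffective estimate — available here only because the Galois assumption reduces $\zeta_K$ to abelian $L$-functions — that closes the gap, at the price of making the theorem ineffective. Under GRH the exceptional zero cannot occur, so the same argument, now run on the Galois closure of $K$ and using Littlewood's effective bounds $L(1,\chi)^{\pm 1}=O\big(\log\log(N\mathfrak{f}\,|d_k|)\big)$ in place of Siegel's theorem, no longer requires $K/\mathbb{Q}$ itself to be normal — which is the content of the final assertion.
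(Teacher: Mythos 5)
The paper does not prove this theorem; it is quoted as a classical result and attributed to Brauer \cite{Brauer} (building on Siegel \cite{Sie}), so there is no in-paper proof to compare against. Your sketch is essentially Brauer's original 1947 argument: reduce via the class number formula \eqref{class-number formula} to the two-sided estimate $\rho_K=|d_K|^{o(1)}$, show the upper bound elementarily, and obtain the lower bound by Brauer induction into Hecke $L$-functions of abelian characters of intermediate fields together with an ineffective Siegel-type lower bound on $L(1,\chi)$. That is the right structure, and the first reduction (including the degree bookkeeping $n_{K_i}/\log|d_{K_i}|\to 0$ and $\log w_{K_i}=O(\log n_{K_i})$) is correct.

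Two places in the sketch need more than you give them. First, the lower bound $L(1,\chi)\gg_\epsilon(N\mathfrak f_\chi\,|d_k|)^{-\epsilon}$ for a ray-class character $\chi$ of a varying base field $k$ is not literally Siegel's theorem (which concerns real Dirichlet characters over $\mathbb Q$); it is a genuine extension that Brauer proves, and the conductor--discriminant bookkeeping must be done with some care because the exponents $n_i$ in the Brauer decomposition are signed, so the upper and lower bounds on the factors have to be deployed in the right direction and the total conductor that appears is governed by the discriminant of the \emph{Galois} field, not of a proper subfield.

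Second, and more seriously, your deduction of the GRH clause ``run the same argument on the Galois closure of $K$'' does not go through as stated. If $L$ denotes the Galois closure of $K$, the conductors of the Hecke $L$-functions that appear when one factors $\zeta_K$ through $\mathrm{Gal}(L/\mathbb Q)$ are controlled by $|d_L|$, not $|d_K|$; moreover $n_L$ can be as large as $n_K!$ and $|d_L|$ can grow much faster than $|d_K|$, so the hypothesis $|d_{K_i}|^{1/n_{K_i}}\to\infty$ need not yield $|d_{L_i}|^{1/n_{L_i}}\to\infty$, and the error terms you would inherit are measured against the wrong discriminant. The standard argument under GRH is direct and needs no factorization at all: GRH for $\zeta_{K}$ gives a zero-free half-plane, and the explicit formula (or a contour shift as in the Lagarias--Odlyzko estimate used elsewhere in the paper) yields $\rho_K^{\pm1}\ll\bigl(\log\log|d_K|\bigr)^{O(n_K)}$ with no normality assumption, which is $|d_K|^{o(1)}$ under the degree hypothesis. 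Replacing your Galois-closure step with that direct argument would make the GRH clause correct.
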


The reason $h_K R_K$ appears in the above result is because of the class number formula. Recall the Dirichlet class number formula, which states that if $\rho_K$ denotes the residue of the Dedekind zeta-function $\zeta_K(s)$ at $s=1$, then
\begin{equation}\label{class-number formula}
    \rho_K = \frac{2^{r_1} (2\pi)^{r_2} h_KR_K}{\omega_K \sqrt{|d_K|}},
\end{equation}
where $r_1$ and $r_2$ denote the number of real and complex embeddings of $K$, and $\omega_K$ denotes the number of roots of unity in $K$. Using the class number formula, it is easy to see that the equation \eqref{brauer-siegel} is equivalent to
\begin{equation}\label{brauer-siegel-2}
    \lim_{i\to\infty} \frac{\log \rho_{K_i}}{\log \sqrt{|d_{K_i}|}} =0.
\end{equation}

In 2002, Tsfasman and Vl\v{a}du\c{t} \cite{TV} initiated a more extensive study of the above theorem for families of number fields, where the condition $|d_{K_i}|^{1/{n_{K_i}}} \to \infty$ can be weakened. This led to the formulation of the generalized Brauer-Siegel conjecture in \cite{TV}.\\

Define the genus of $K$ as
\begin{equation*}
g_K := \log \sqrt{|d_K|}.
\end{equation*}
Let $N_q(K)$ denote the number of non-archimedian places $v$ of $K$ such that $Norm(v)=q$. Suppose $\mathcal{K} = \{K_i\}_{i\in\mathbb{N}}$ is a family of number fields. Define the following limits.
\begin{equation*}
\phi_q := \lim_{i\to\infty} \frac{N_q(K_i)}{g_{K_i}}
\end{equation*}
for a prime power $q$.
Also define
\begin{equation*}
\phi_{\mathbb{R}} := \lim_{i\to \infty} \frac{r_1(K_i)}{g_{K_i}}, \hspace{5mm} \phi_{\mathbb{C}} := \lim_{i\to \infty} \frac{r_2(K_i)}{g_{K_i}},
\end{equation*}
where $r_1(K_i)$ and $r_2(K_i)$ are the number of real and complex embeddings of $K_i$ respectively.\\

We say that a family $\mathcal{K} = \{K_i\}$ is \textit{asymptotically exact} if the limits $ \phi_{\mathbb{R}}$, $\phi_{\mathbb{C}}$ and $\phi_q$ exist for all prime powers $q$. We say that an asymptotically exact family $\mathcal{K} = \{K_i\}$ is \textit{asymptotically bad}, if $ \phi_{\mathbb{R}} = \phi_{\mathbb{C}} = \phi_q =0$ for all prime powers $q$. This is analogous to saying that the root discriminant $ |d_{K_i}|^{1/n_{K_i}}$ tends to infinity as $i\to \infty$. If an asymptotically exact family $\mathcal{K}$ is not asymptotically bad, we say that it is \textit{asymptotically good}.
For a number field $K/\mathbb{Q}$, the Dedekind zeta-function has the Euler product
\begin{equation*}
\zeta_K(s):= \prod_{\mathfrak{P}\subset K} \left( 1- N\mathfrak{P}^{-s}\right)^{-1},
\end{equation*}
for $\Re(s)>1$, where $\mathfrak{P}$ runs over all non-zero prime ideals in the ring of integers of $K$. This can be re-written as
\begin{equation*}
    \zeta_K(s) = \prod_q \left(1- q^{-s}\right)^{-N_q(K)},
\end{equation*}
for $\Re(s)>1$, where $q$ runs over all prime powers.\\

Define the Brauer-Siegel limits (as in \cite{TV}) as follows.
For an asymptotically exact family $\mathcal{K}=\{K_i\}$, 
\begin{equation*}
BS(\mathcal{K}) := \lim_{i\to\infty} \frac{\log h_{K_i} R_{K_i}}{g_{K_i}},
\end{equation*}
\begin{equation*}
\rho(\mathcal{K}) := \lim_{i\to\infty} \frac{\log \rho_{K_i}}{g_{K_i}}.
\end{equation*}
The existence of the above limits is not clear in general. However, under GRH, the limits $BS(\mathcal{K})$ and $\rho(\mathcal{K})$ exist for any asymptotically exact family $\mathcal{K}$. The generalized Brauer-Siegel conjecture, as formulated by Tsfasman-Vl\v{a}du\c{t} \cite{TV} is stated below. 
\begin{conjecture}[Tsfasman-Vl\v{a}du\c{t}]\label{BS}
For any asymptotically exact family $\mathcal{K}$,
\begin{equation}\label{BS1}
BS(\mathcal{K})=1 + \sum_q \phi_q \log\frac{q}{q-1} -\phi_{\mathbb{R}} \log 2 - \phi_{\mathbb{C}}\log 2\pi.
\end{equation}
Using the class number formula, the above statement is equivalent to
\begin{equation}\label{BS2}
\rho(\mathcal{K}) =  \sum_q \phi_q \log\frac{q}{q-1}.
\end{equation}
\end{conjecture}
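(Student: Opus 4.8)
The content of Conjecture \ref{BS} that admits a proof is the asserted equivalence of \eqref{BS1} and \eqref{BS2}, the conjecture itself being a well-known open problem; I will sketch that equivalence, and then the shape of the conditional arguments that occupy the rest of this section. The equivalence is immediate from the class number formula \eqref{class-number formula}: the plan is to take logarithms there, obtaining
\begin{equation*}
\log \rho_{K_i} = r_1(K_i)\log 2 + r_2(K_i)\log 2\pi + \log\bigl(h_{K_i}R_{K_i}\bigr) - \log \omega_{K_i} - g_{K_i},
\end{equation*}
and then to divide by $g_{K_i}$ and let $i\to\infty$. In an asymptotically exact family one has $r_1(K_i)/g_{K_i}\to\phi_{\mathbb R}$ and $r_2(K_i)/g_{K_i}\to\phi_{\mathbb C}$ by definition, while $\log(h_{K_i}R_{K_i})/g_{K_i}\to BS(\mathcal K)$ and $\log\rho_{K_i}/g_{K_i}\to\rho(\mathcal K)$ whenever those limits exist.

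The only term needing an estimate is $\log\omega_{K_i}/g_{K_i}$, which I would dispatch as follows: a primitive $m$-th root of unity in $K$ forces $\varphi(m)\le n_K$, so $\omega_{K_i}\ll n_{K_i}\log\log n_{K_i}$ and $\log\omega_{K_i}=O(\log n_{K_i})$; Minkowski's discriminant bound gives $g_{K_i}\gg n_{K_i}$ (in particular $g_{K_i}\to\infty$, since a family consists of distinct fields and, by Hermite--Minkowski, only finitely many number fields have bounded discriminant), hence $\log\omega_{K_i}=O(\log g_{K_i})=o(g_{K_i})$. Passing to the limit then yields
\begin{equation*}
\rho(\mathcal K)=\phi_{\mathbb R}\log 2+\phi_{\mathbb C}\log 2\pi+BS(\mathcal K)-1,
\end{equation*}
so $\rho(\mathcal K)$ exists precisely when $BS(\mathcal K)$ does, and substituting the right-hand side of \eqref{BS2} for $\rho(\mathcal K)$ in this identity reproduces \eqref{BS1}; this is the claimed equivalence, and it is the routine part.

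The conjecture itself I would approach through the formulation \eqref{BS2} together with the Euler product $\zeta_{K_i}(s)=\prod_q(1-q^{-s})^{-N_q(K_i)}$: writing $\log\rho_{K_i}$ as a sum over prime powers by an explicit-formula / Mertens-type estimate in the spirit of Tsfasman--Vl\v{a}du\c{t}, and discarding the remaining local factors, which are positive, gives the unconditional lower bound $\rho(\mathcal K)\ge\sum_q\phi_q\log\frac{q}{q-1}$. The genuine obstacle is the matching upper bound for $\log\rho_{K_i}$ --- equivalently, controlling the low-lying zeros of $\zeta_{K_i}$ near $s=1$ --- which is where GRH enters, or, as in the rest of this section, suitable upper bounds on $|\gamma_{K_i}|$ across the family, with \eqref{log-derivative-gamma_K} serving as the bridge between $\gamma_{K_i}$ and the behaviour of $\zeta_{K_i}$ at $s=1$.
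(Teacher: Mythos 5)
Your proposal correctly recognizes that the only provable content of Conjecture \ref{BS} is the equivalence of \eqref{BS1} and \eqref{BS2}, and your derivation is sound: taking logarithms in \eqref{class-number formula}, dividing by $g_{K_i}$, and sending $i\to\infty$ is exactly what makes the two statements interchangeable. The paper itself treats this step as immediate (``Using the class number formula, the above statement is equivalent to\ldots'') and records no argument, so what you have written is the same route with the bookkeeping made explicit. The one term that genuinely needs a bound is $\log\omega_{K_i}/g_{K_i}$, and your treatment---$\varphi(m)\le n_{K_i}$ forces $\log\omega_{K_i}=O(\log n_{K_i})$, while Minkowski gives $g_{K_i}\gg n_{K_i}$ and $g_{K_i}\to\infty$ along a family, hence $\log\omega_{K_i}=o(g_{K_i})$---is correct and is precisely the point a careful reader would want spelled out. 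Your concluding paragraph on the shape of the conditional arguments (the unconditional lower bound for $\rho(\mathcal K)$ from positivity of the discarded Euler factors, and the role of GRH or of $\gamma_K$-bounds in the matching upper bound) accurately summarizes the strategy the paper pursues in the remainder of Section~2.
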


In the rest of the paper, we shall call the above conjecture as the GBS conjecture. Note that the GBS conjecture for asymptotically bad families is equivalent to the classical Brauer-Siegel conjecture. In \cite{TV}, Tsfasman-Vl\v{a}du\c{t} proved GBS for any asymptotically exact family $\mathcal{K}$ under the assumption of GRH. Unconditionally, they proved it for asymptotically good tower of almost normal number fields. Later in 2005, Zykin \cite{Zyk} showed GBS for asymptotically bad family of almost normal number fields. In \cite{Dix}, the author proved GBS unconditionally for asymptotically good towers and asymptotically bad families of number fields with solvable Galois closure. All other cases are open. For an overview of the recent results and the conjectures, the reader may refer to the excellent survey by P. Lebacque and A. Zykin \cite{Lebacque}. Furthermore, the asymptotic properties of curves over finite fields has been studied in \cite{TV2} and \cite{Tsfasman2}.

\subsection{\bf Bounds on $\gamma_K$ and the GBS conjecture}
\bigskip

In this section, we first give unconditional upper bounds on $\gamma_K$ in some cases. A number field $K$ is said to be \textit{almost normal} if there exists a tower of number fields 
\begin{equation*}
K= K_n \supset K_{n-1}\supset\cdots \supset K_1 = \mathbb{Q},
\end{equation*}
such that $K_{i+1}/K_i$ is Galois for all $1\leq i<n$.\\

\begin{theorem} \label{almost-normal-bound}
Let $K$ be an almost normal number field, not containing any quadratic sub-fields. Then
\begin{equation*}
    |\gamma_K| \leq c \,(\log |d_K|)^4 \,  n_K^3,
\end{equation*}
where $c$ is an absolute positive constant.
\end{theorem}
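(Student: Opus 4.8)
The plan is to combine a standard explicit formula for $\gamma_K$ over the non-trivial zeros of $\zeta_K(s)$ (in the spirit of Ihara \cite{Ihara1}) with the zero-counting estimate recalled in the introduction and, to control zeros near $s=1$, with an effective ``no Siegel zero'' statement that becomes available because $K$ has no quadratic subfield. First observe that the \emph{lower} bound is already contained in \eqref{gamma_K-bounds-ihara}: $\gamma_K\geq-\log\sqrt{|d_K|}\geq -c\,(\log|d_K|)^4 n_K^3$, so only an upper bound for $\gamma_K$ has to be proved. For this I would start from the Hadamard factorization of the completed zeta-function $\Lambda_K(s)=|d_K|^{s/2}\left(\pi^{-s/2}\Gamma(s/2)\right)^{r_1}\left((2\pi)^{-s}\Gamma(s)\right)^{r_2}\zeta_K(s)$, compare the constant terms at $s=1$ of $\Lambda_K'/\Lambda_K$ coming from \eqref{log-derivative-gamma_K} and from the product, and use the symmetry of the zero set under $\rho\mapsto1-\rho$ together with the identity $B_K=-\sum_\rho\Re(1/\rho)$. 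This yields a formula of the shape
\begin{equation*}
\gamma_K = -\log\sqrt{|d_K|} + \frac12\sum_{\rho}\left(\frac{\beta_\rho}{|\rho|^2}+\frac{1-\beta_\rho}{|1-\rho|^2}\right) + O(n_K),
\end{equation*}
where $\rho=\beta_\rho+i\gamma_\rho$ runs over the non-trivial zeros of $\zeta_K(s)$ (so $0<\beta_\rho<1$), the sum converges absolutely, and every summand is positive. Since $-\log\sqrt{|d_K|}<0$, it suffices to bound this sum from above.

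I would split the sum according to whether $|\gamma_\rho|\geq1$ or $|\gamma_\rho|<1$. For $|\gamma_\rho|\geq1$ one has $\frac{\beta_\rho}{|\rho|^2}+\frac{1-\beta_\rho}{|1-\rho|^2}\leq 2\gamma_\rho^{-2}$, and a dyadic summation fed by the crude bound $N_K(T)\ll\log|d_K|+n_K\log(T+2)$ (from the formula for $N_K(T)$ quoted above) shows that this part contributes $\ll\log|d_K|+n_K$. For $|\gamma_\rho|<1$ there are $\ll\log|d_K|+n_K$ zeros (bounded for instance by $N_K(3)$), and each of them contributes $\leq\beta_\rho^{-1}+(1-\beta_\rho)^{-1}$; so the whole problem reduces to bounding $\min(\beta_\rho,1-\beta_\rho)$ from below for zeros near the real axis, equivalently (by the functional equation) bounding $1-\beta_\rho$ from below for zeros with $\beta_\rho\geq\tfrac12$, $|\gamma_\rho|<1$.

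The classical zero-free region — obtained from the positivity of $-\zeta_K'/\zeta_K$ on $(1,\infty)$ and the $3+4\cos\theta+\cos2\theta\geq0$ inequality applied to $\zeta_K(\sigma)^3|\zeta_K(\sigma+it)|^4|\zeta_K(\sigma+2it)|$ — pushes every zero with $\gamma_\rho\neq0$ back to $\beta_\rho<1-c(\log|d_K|+n_K)^{-1}$ and leaves at most one exceptional simple \emph{real} zero $\beta_0$. Hence
\begin{equation*}
\frac12\sum_{|\gamma_\rho|<1}\left(\frac{\beta_\rho}{|\rho|^2}+\frac{1-\beta_\rho}{|1-\rho|^2}\right)\ll \frac{1}{1-\beta_0} + N_K(3)\bigl(\log|d_K|+n_K\bigr)\ll \frac{1}{1-\beta_0} + (\log|d_K|)^2,
\end{equation*}
where I used Minkowski's bound $n_K\ll\log|d_K|$, and therefore $\gamma_K\ll(1-\beta_0)^{-1}+(\log|d_K|)^2$.

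It remains to bound $1-\beta_0$ from below, and this is where the hypothesis enters and where the main difficulty lies. Since $K$ is almost normal and contains no quadratic subfield, I would invoke a theorem of Stark on Siegel zeros — stated classically for normal fields, and propagated along the tower in the definition of ``almost normal'' via the Aramata--Brauer factorization $\zeta_K=\zeta\cdot\prod_i\bigl(\zeta_{K_{i+1}}/\zeta_{K_i}\bigr)$ into entire factors — to conclude that, with no quadratic subfield available to ``carry'' an exceptional zero, one has an effective lower bound of the shape $1-\beta_0\gg(\log|d_K|)^{-3}n_K^{-3}$. Feeding this into the previous line gives $\gamma_K\ll(\log|d_K|)^3 n_K^3+(\log|d_K|)^2\leq c\,(\log|d_K|)^4 n_K^3$, which together with the lower bound from \eqref{gamma_K-bounds-ihara} proves the theorem. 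I expect this last step to be the genuine obstacle: turning the qualitative implication ``no quadratic subfield $\Rightarrow$ no Siegel zero'' into a quantitative bound that is polynomial in $\log|d_K|$ and $n_K$ — weaker than the ineffective Siegel-type estimate but usable with explicit constants — and propagating it correctly through the almost-normal tower. Everything else can be run with the crudest available inputs, which is precisely why the exponents $4$ and $3$ in the statement are so generous.
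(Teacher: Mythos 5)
Your proposal is correct in outline but follows a genuinely different route from the paper, and it is worth comparing the two. The paper works from the Mellin-transform identity $Z_K(s)/s=-1/(sg_K)+\int_1^\infty(G_K(x)-x)x^{-s-1}dx$ and inserts the effective Lagarias--Odlyzko estimate $|G_K(x)-x|\leq C_1x\exp(-C_2\sqrt{\log x/n})+x^\beta/\beta$ valid for $\log x\geq C_3\,n\,g_K^2$; the exponents $4$ and $3$ in the statement come directly from this threshold (the trivial estimate on $[1,\exp(C_3ng_K^2)]$ produces the $n^3g_K^4$), not from the Siegel-zero constraint. You instead pass straight to the explicit formula over zeros (essentially the identity $\sum_\rho 1/\rho=\gamma_K+\tfrac12\log|d_K|-\tfrac12r_1(\gamma+\log4\pi)-r_2(\gamma+\log2\pi)+1$, derived from the paper's Lemma~\ref{partial-summation-stark}), bound the tail with the local zero count $N_K(T+1)-N_K(T)\ll\log|d_K|+n_K\log(T+2)$, and handle the low-lying zeros with the effective de la Vall\'ee Poussin region plus Stark. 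Both proofs hinge on the same input from Stark \cite{Stk} to constrain the exceptional zero, but you undersell it: Stark's theorem is stated directly for almost-normal fields (no Aramata--Brauer propagation is needed), and it gives $1-\beta_0\geq 1/(16\log|d_K|)$, much stronger than your conjectured $(\log|d_K|)^{-3}n_K^{-3}$. Feeding the true Stark bound into your argument together with Minkowski's $n_K\ll\log|d_K|$ actually yields $\gamma_K\ll(\log|d_K|)^2$, which is sharper than the stated theorem; in other words, your route, done carefully, buys a better exponent, while the paper's route is more directly in line with the Lagarias--Odlyzko machinery it later reuses for the solvable-closure case. The one point you flag as the ``genuine obstacle'' is therefore not an obstacle at all -- it is exactly the content of \eqref{Stark-zero} -- and the rest of your sketch (the $3$--$4$--$1$ zero-free region with an absolute constant, the dyadic tail, the absorption of the archimedean terms into $O(n_K)$) is standard and sound.
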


Let $K/\mathbb{Q}$ be a number field and $L\supseteq K \supseteq \mathbb{Q}$ be the normal closure of $K$ over $\mathbb{Q}$. We say that $K$ has \textit{solvable normal closure} if the Galois group $Gal(L/\mathbb{Q})$ is solvable.

\begin{theorem}\label{solvable-closure-bound}
Let $K$ be a number field with solvable normal closure, not containing any quadratic sub-fields. Then
\begin{equation*}
    |\gamma_K| \leq c_1 \, (\log |d_K|)^{c_2 \log\log |d_K|},
\end{equation*}
where $c_1, c_2$ are absolute positive constants.
\end{theorem}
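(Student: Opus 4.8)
The plan is to factor $\zeta_K$ into abelian (Hecke) $L$-functions, recover $\gamma_K$ from their logarithmic derivatives at $s=1$, bound the contribution of each factor by a power of $\log|d_K|$ away from possible exceptional zeros, and then control the exceptional zeros by a descent along the solvable structure whose length is $O(\log\log|d_K|)$. Let $L/\mathbb{Q}$ be the normal closure of $K$, $G=\mathrm{Gal}(L/\mathbb{Q})$ (solvable) and $H=\mathrm{Gal}(L/K)$. Using the theory of induced representations for solvable groups (as in the Aramata--Brauer and Uchida--van der Waall circle of ideas), one obtains a factorisation
\begin{equation*}
\zeta_K(s)=\zeta(s)\prod_{i}L(s,\chi_i)^{m_i},
\end{equation*}
where each $\chi_i$ is a non-trivial ray-class (Hecke) character of a subfield $E_i\subseteq L$ with $[E_i:\mathbb{Q}]<[K:\mathbb{Q}]$, the $m_i$ are non-negative integers with $\sum_i m_i[E_i:\mathbb{Q}]=[K:\mathbb{Q}]-1$, and each $L(s,\chi_i)$ is entire and non-vanishing at $s=1$. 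Comparing the expansion \eqref{log-derivative-gamma_K} for $\zeta_K$ with that for $\zeta$ gives the exact identity
\begin{equation*}
\gamma_K=\gamma+\sum_{i}m_i\,\frac{L'}{L}(1,\chi_i),
\end{equation*}
and one may arrange that whenever $\chi_i$ is quadratic it is the character cutting out a quadratic extension $F_i=E_i(\sqrt{\alpha_i})\subseteq L$, so that $\frac{L'}{L}(1,\chi_i)=\gamma_{F_i}-\gamma_{E_i}$.

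Next I would estimate each factor. From the Hadamard factorisation of $L(s,\chi_i)$ one gets
\begin{equation*}
\Bigl|\frac{L'}{L}(1,\chi_i)\Bigr|\ll \log\mathfrak{q}(\chi_i)+\sum_{\beta}\frac{1}{1-\beta},
\end{equation*}
the last sum running over real zeros $\beta$ of $L(s,\chi_i)$ with $1-\beta<c/\log\mathfrak{q}(\chi_i)$, which is empty unless $\chi_i$ is quadratic. The conductor--discriminant formula $\log|d_K|=\sum_i m_i\log\mathfrak{f}(\mathrm{Ind}\,\chi_i)$ forces $\log\mathfrak{q}(\chi_i)\ll\log|d_K|$ for each $i$, and Minkowski's bound $|d_K|\ge c_0^{\,[K:\mathbb{Q}]}$ gives $[K:\mathbb{Q}]\ll\log|d_K|$, hence also $\sum_i m_i\ll\log|d_K|$. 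Thus the factors with $\chi_i$ non-quadratic together contribute $\ll(\log|d_K|)^{2}$ to $\gamma_K-\gamma$, and what remains is to bound $\sum_i m_i(1-\beta_i)^{-1}$ over those $i$ with $\chi_i$ quadratic possessing an exceptional zero $\beta_i$.

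This is where the hypothesis enters and where the main work lies. A one-dimensional order-two constituent of $\mathrm{Ind}_H^G\mathbf 1$ would be precisely the quadratic character attached to a quadratic subfield of $K$, so the assumption forces $E_i\ne\mathbb{Q}$ for every quadratic $\chi_i$; hence each potentially exceptional character lives on a proper subfield of $L$. By Landau's theorem (in the form valid for Hecke $L$-functions), at most one of these characters has its exceptional zero within $c_1/\log|d_K|$ of $1$, and the others contribute $\ll(\log|d_K|)^{2}$. For the worst zero I would pass to the field $F_i\subset L$ — which again has solvable normal closure and discriminant $\ll|d_K|^{O(1)}$ — and repeat the entire argument, the point being that this descent runs along a chain of number fields adapted to the solvable filtration of $G$ whose length is bounded by the derived length of $G$, which is $\ll\log\log|G|\ll\log\log|d_K|$ since $|G|\le[K:\mathbb{Q}]!$ and $[K:\mathbb{Q}]\ll\log|d_K|$. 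At each of these $\ll\log\log|d_K|$ levels the number of newly produced characters and the sizes of their conductors stay bounded by fixed powers of $\log|d_K|$ (the conductors remaining quasi-polynomial in $|d_K|$), so iterating the estimate of the previous paragraph accumulates to a total bound $\ll(\log|d_K|)^{c_2\log\log|d_K|}$; Siegel's (ineffective) lower bound for $L(1,\cdot)$ is invoked only to terminate the recursion, which is why the constant $c_1$ in the theorem is absolute but ineffective.

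The hard part is exactly this last step. Because the Artin conjecture is unavailable, the factors $L(s,\chi_i)$ must be treated individually, and a Siegel zero of a quadratic Hecke character over a \emph{proper} subfield of $L$ is not excluded by $K$ having no quadratic subfield — the hypothesis removes only the one-dimensional (rational) case, and the higher-dimensional exceptional zeros have to be tracked through the descent. Ensuring that this descent is short (length $\ll\log\log|d_K|$, via the solvable structure of $G$) and that the conductors do not inflate beyond quasi-polynomial size is precisely what produces the extra factor $\log\log|d_K|$ in the exponent; once the factorisation and these combinatorial bounds are in place, the remaining analytic estimates are routine.
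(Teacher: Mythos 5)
The paper's proof is quite different and much shorter: it applies V.~K.~Murty's quantitative Stark-type result \eqref{Murty-zero} directly to $\zeta_K$. Since $K$ has solvable normal closure and no quadratic subfield, any Siegel zero $\beta$ of $\zeta_K(s)$ must satisfy $\beta < 1 - c/(n^{e(n)}\delta(n)\log|d_K|)$, and because $n\ll\log|d_K|$ and $e(n)\ll\log n$, the quantity $n^{e(n)}\delta(n)$ is $\ll(\log|d_K|)^{c\log\log|d_K|}$. One then feeds this zero-free region into the Lagarias--Odlyzko estimate and the Mellin transform identity \eqref{L-O}, exactly as in Theorem~\ref{almost-normal-bound}, and the theorem falls out with effective constants. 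You instead try to factor $\zeta_K$ into Hecke $L$-functions of subfields of the normal closure via monomiality of solvable groups, and to chase exceptional zeros by a descent along the solvable filtration. In effect you are attempting to reprove Murty's theorem (which is itself established by a descent of this flavour) rather than citing it, and then extract $\gamma_K$ from the factorisation.

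There is a genuine gap in the descent. The hypothesis of the theorem only forbids quadratic subfields of $K$; it says nothing about the auxiliary fields $F_i\subset L$ you descend to. A quadratic $\chi_i$ over $E_i\neq\mathbb{Q}$ cuts out $F_i/E_i$, and $F_i$ may very well contain a quadratic field over $\mathbb{Q}$, so the induction hypothesis is not preserved and the recursion does not close. You acknowledge this (``the hypothesis removes only the one-dimensional case\dots the higher-dimensional exceptional zeros have to be tracked through the descent'') but offer no mechanism for tracking them; the plan to ``terminate the recursion'' by Siegel's theorem is not spelled out, and is problematic on two counts: Siegel-type lower bounds for Hecke $L$-functions over a varying base field $E_i$ are not uniform in $E_i$ (the classical Siegel theorem is for Dirichlet characters, and the known extensions carry dependence on the base field), and any use of Siegel would make $c_1$ ineffective, whereas the paper's proof via Murty's zero-free region and Lagarias--Odlyzko yields effective constants. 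The combinatorial claims -- that the derived-length bound controls the depth of the descent, that conductors remain quasi-polynomial, and that a Landau-type exclusion applies across Hecke $L$-functions of different base fields -- are also asserted rather than proved and are exactly where the real work would lie. The clean fix is to do what the paper does: invoke Murty's theorem on exceptional zeros for solvable normal closures directly, which packages the entire descent into a single cited zero-free region.
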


It is important to point out that the bounds above are much weaker than the conditional bounds under GRH given by \eqref{gamma_K-bounds-ihara}. However, it is possible to utilize these weak bounds to prove the GBS conjecture for towers of such number fields. More generally, we prove the following.

\begin{theorem}\label{euler-kronecker-brauer-siegel}
Let $\mathcal{K} = \{K_i\}$ be a tower of number fields, satisfying
\begin{equation}\label{bound-condition-gamma_K}
     |\gamma_{K_i}| \ll \exp\left((\log \log |d_{K_i}|)^m\right),
\end{equation}
for an arbitrary large $m$. Then the GBS conjecture holds for $\mathcal{K}$.
\end{theorem}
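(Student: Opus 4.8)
The plan is to use \eqref{BS2} to reduce the conjecture for $\mathcal K$ to proving that $\lim_{i\to\infty}\tfrac{\log\rho_{K_i}}{g_{K_i}}=\sum_q\phi_q\log\tfrac q{q-1}$ (an infinite tower is automatically asymptotically exact, so the conjecture is meaningful for $\mathcal K$; see \cite{TV}). By the basic inequality of Tsfasman--Vl\v{a}du\c{t} (\cite{TV}, cf.\ \cite{Lebacque}), the series $\sum_q\phi_q\log\tfrac q{q-1}$ converges and, along any sufficiently slowly growing sequence $x_i\to\infty$,
\[\limsup_{i\to\infty}\frac1{g_{K_i}}\sum_{N\mathfrak P\le x_i}\log\frac{N\mathfrak P}{N\mathfrak P-1}\le\sum_q\phi_q\log\frac q{q-1};\]
and since $\tfrac1{g_{K_i}}\sum_{q\le Q}N_q(K_i)\log\tfrac q{q-1}\to\sum_{q\le Q}\phi_q\log\tfrac q{q-1}$ for each fixed $Q$ while all summands are nonnegative, the matching $\liminf$ is $\ge\sum_q\phi_q\log\tfrac q{q-1}$. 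Thus everything reduces to producing one admissible sequence $x_i\to\infty$ with
\begin{equation}\label{plan-key}\log\rho_{K_i}=\sum_{N\mathfrak P\le x_i}\log\frac{N\mathfrak P}{N\mathfrak P-1}-\log\log x_i+o(g_{K_i}),\end{equation}
for then $\tfrac{\log\rho_{K_i}}{g_{K_i}}$ is squeezed onto the desired value.

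To obtain \eqref{plan-key} I would start from the effective generalised Mertens (prime ideal) theorem: for every number field $K$ and $x\ge2$,
\[\sum_{N\mathfrak P\le x}\log\frac{N\mathfrak P}{N\mathfrak P-1}=\log\log x+\gamma+\log\rho_K+E_K(x),\]
where, by the explicit formula for $\zeta_K$ and the unconditional de la Vall\'ee Poussin zero-free region with its at most one exceptional real zero $\beta_1$,
\[E_K(x)\ll\frac{x^{\beta_1-1}}{(1-\beta_1)\log x}+\exp\!\left(-c\sqrt{\frac{\log x}{\log|d_K|+n_K}}\right)+\frac{n_K}{\sqrt x},\]
the $\beta_1$-term being absent when $\zeta_K$ has no exceptional zero. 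The decisive step is to bound $1-\beta_1$ from below using the hypothesis. For this I would use the identity, obtained from the Hadamard factorisation of the completed zeta function together with its functional equation,
\[\gamma_K=\sum_\rho\frac1{1-\rho}-\frac12\log|d_K|+r_1\!\left(\tfrac12\log\pi+\tfrac{\gamma}{2}+\log2\right)+r_2\!\left(\log2\pi+\gamma\right)-1,\]
where $\rho=\beta_\rho+i\gamma_\rho$ runs over the nontrivial zeros of $\zeta_K$ and $\sum_\rho\tfrac1{1-\rho}=\sum_\rho\tfrac{1-\beta_\rho}{|1-\rho|^2}>0$. Since every term is positive and the $r_1,r_2$ contributions are negative, $\tfrac1{1-\beta_1}\le\sum_\rho\tfrac1{1-\rho}\le\gamma_K+\tfrac12\log|d_K|+1$, whence
\[1-\beta_1\ge\frac1{R_K},\qquad R_K:=|\gamma_K|+g_K+1.\]

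Now the hypothesis does its job. If $|\gamma_{K_i}|\ll\exp\big((\log\log|d_{K_i}|)^m\big)$ for some fixed $m$, then, as $\log|d_{K_i}|=2g_{K_i}\to\infty$, we get $\log R_{K_i}=O\big((\log g_{K_i})^m\big)=o(g_{K_i})$, i.e.\ $R_{K_i}=|d_{K_i}|^{o(1)}$. Using in addition $\log|d_{K_i}|+n_{K_i}=O(g_{K_i})$ (Minkowski), one can therefore choose $x_i\to\infty$ with
\[\log\log x_i=o(g_{K_i})\qquad\text{and}\qquad\log x_i\gg R_{K_i}g_{K_i}+g_{K_i}\log^2 g_{K_i},\]
the two requirements being compatible precisely because $\log R_{K_i}=o(g_{K_i})$, so that the right-hand side is $e^{o(g_{K_i})}$. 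For such $x_i$ every term of $E_{K_i}(x_i)$ is $o(1)$: the exceptional-zero term is $\ll R_{K_i}x_i^{-1/R_{K_i}}/\log x_i\ll 1/g_{K_i}$, the zero-free-region term is $\ll g_{K_i}^{-c}$, and $n_{K_i}/\sqrt{x_i}\to0$; moreover $\log\log x_i=o(g_{K_i})$. Hence \eqref{plan-key} holds, and together with the bounds of the first paragraph this yields $\tfrac{\log\rho_{K_i}}{g_{K_i}}\to\sum_q\phi_q\log\tfrac q{q-1}$, i.e.\ the GBS conjecture for $\mathcal K$.

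The hard part, and where I expect the real work, is the estimate $E_{K_i}(x_i)=o(g_{K_i})$. All zeros of $\zeta_{K_i}$ other than the exceptional real zero $\beta_1$ are kept off $s=1$ by the unconditional zero-free region, so it is exactly this Siegel zero that must be tamed, and the bound on $\gamma_{K_i}$ tames it --- through the Hadamard identity above --- just strongly enough that a truncation level $x_i$ can be found which is at once large enough relative to $K_i$ to annihilate $E_{K_i}(x_i)$ and small enough ($\log\log x_i=o(g_{K_i})$) not to spoil the squeeze; note only the crude consequence $\log|\gamma_{K_i}|=o(\log|d_{K_i}|)$ of the hypothesis is actually used, which is why so weak a bound suffices. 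The remaining technical points are to make the effective generalised Mertens theorem precise, with explicit dependence on $d_K$, $n_K$ and $\beta_1$, and to verify the uniform tail estimates underlying the diagonal passage $x_i\to\infty$, for which one appeals once more to the basic inequality.
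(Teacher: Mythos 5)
Your proposal is correct in spirit and takes a genuinely different route from the paper. The paper never touches a Mertens-type partial sum: it writes $\zeta_K(s)=\rho_K F_K(s)/(s-1)$, takes logarithms at $s=1+\theta_K$, and reduces everything to three limits, viz.\ $\liminf\log\zeta_{K_i}(1+\theta_{K_i})/g_{K_i}\ge\sum_q\phi_q\log\tfrac{q}{q-1}$, $\limsup\log F_{K_i}(1+\theta_{K_i})/g_{K_i}\le 0$, and $\log\theta_{K_i}/g_{K_i}\to0$. The second of these is where the hypothesis on $\gamma_K$ enters: using Stark's partial-fraction lemma one shows $|Z_K(1+\theta)-(-\gamma_K)|=O(1)$ for $\theta<1/n_K$, so $\log F_K(1+\theta_K)\ll\theta_K|\gamma_K|$, and the choice $\theta_{K_i}=\exp(-(\log g_{K_i})^{m+2})$ balances the two constraints exactly as your choice of $x_i$ does. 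Your proof instead writes $\log\rho_K$ as a truncated Mertens sum plus an error $E_K(x)$, and makes the role of the exceptional zero completely explicit: the identity $\gamma_K=\sum_\rho\tfrac1{1-\rho}-g_K+\dots$ gives $1-\beta_1\ge1/R_K$ with $R_K=|\gamma_K|+g_K+1$, and the hypothesis guarantees $\log R_K=o(g_K)$, which is precisely what is needed to find a truncation level $x_i$ that kills $E_{K_i}(x_i)$ while keeping $\log\log x_i=o(g_{K_i})$. In the paper this Siegel-zero mechanism is implicit (it is hidden inside the bound on $\sum_\rho(1/(1+\theta-\rho)-1/(1-\rho))$, where the term attached to the low-lying zero $1-\beta_1$ is what is really being controlled), so your version is arguably more transparent about where the hypothesis is used. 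One further observation: your $\liminf$ argument for the truncated Mertens sum (fix $Q$, pass to the limit in $i$, then let $Q\to\infty$) is a Fatou argument that does not use the tower structure at all, and combined with the Tsfasman--Vl\v{a}du\c{t} $\limsup$ bound on $\log\rho_{K_i}/g_{K_i}$ (valid for any asymptotically exact family) this would yield the statement for general asymptotically exact families rather than just towers; the paper instead invokes the monotonicity $\phi_p\le N_p(K_i)/g_{K_i}$ available only in towers. The places where your sketch is not yet airtight are technical rather than conceptual: you should pin down the precise effective Mertens--Rosen error term with its range of validity (Lagarias--Odlyzko type bounds require $\log x\gg n_K g_K^2$, which your stated choice $\log x_i\gg R_{K_i}g_{K_i}+g_{K_i}\log^2g_{K_i}$ need not satisfy when $|\gamma_{K_i}|$ is small; simply enlarge the lower bound on $\log x_i$ to include $n_{K_i}g_{K_i}^2$, which does not affect $\log\log x_i=o(g_{K_i})$), and the phrase "the $r_1,r_2$ contributions are negative" is misstated (they are positive in your displayed identity for $\gamma_K$, so they subtract only after you solve for $\sum_\rho 1/(1-\rho)$), though your conclusion $\sum_\rho 1/(1-\rho)\le\gamma_K+g_K+1$ is correct.
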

In fact, in Theorem \ref{euler-kronecker-brauer-siegel}, condition \eqref{bound-condition-gamma_K} can be replaced by 
\begin{equation*}
    |\gamma_{K_i}| \ll \exp (\alpha_i),
\end{equation*}
where $\alpha_i = o\bigg( \frac{g_{K_i}}{\log g_{K_i}}\bigg)$, that is,
\begin{equation*}
    \lim_{i\to\infty}\,\, \frac{\alpha_i \log g_{K_i}}{g_{K_i}} =0.
\end{equation*}

\subsection{\bf Preliminaries}
\bigskip

In this section, we state some facts and results, which will be useful in the proof of the above theorems.

\subsubsection{\bf Exceptional zeros near $s=1$}
\medskip

For a number field $K$, $\zeta_K(s)$ has at most one real zero $\beta$ in the region
\begin{equation}\label{Siegel-zero}
    1-\frac{1}{4\log |d_K|} < \beta < 1.
\end{equation}
This zero, if it exists, is called the exceptional zero or sometimes the Siegel zero of $\zeta_K(s)$.\\ 

In \cite{Stk}, H. M. Stark showed that for an almost normal number field $K$, if $\zeta_K(s)$ has a real zero $\beta$ in the region
\begin{equation}\label{Stark-zero}
     1-\frac{1}{16\log |d_K|} < \beta < 1,
\end{equation}
then there exists a sub-field $N\subset K$, with $[N:\mathbb{Q}]=2$ such that $\zeta_N(\beta)=0$. In other words, every Stark zero must arise from a quadratic field.
\medskip

Building on the ideas of Stark and using some beautiful group theoretic techniques, V. K. Murty \cite{Km} obtained a similar result for number fields with solvable normal closure. More precisely, he showed that if $K$ has solvable normal closure over $\mathbb{Q}$ and if $\zeta_K(s)$ has a real zero $\beta$ in the region
\begin{equation}\label{Murty-zero}
1 - \frac{c}{n^{e(n)} \delta(n) \log d_K} \leq \beta < 1,
\end{equation}
then there is a quadratic field $N\subseteq K$, such that $\zeta_N(\beta) = 0$. Here, $n$ denotes the degree $[K:\mathbb{Q}]$, $c$ is an absolute positive constant,
\begin{align*}
e(n) & := \max_{p^\alpha || n} \alpha, \\
\delta(n) & := (e(n) + 1)^2 \, \, 3^{1/3} \, \, 12^{(e(n) -1)}.
\end{align*}
The above mentioned result of Stark and Murty will be crucial in the proof of Theorem \ref{almost-normal-bound} and Theorem \ref{solvable-closure-bound}.
\medskip

\subsubsection{\bf Lagarias-Odlyzko bounds}
\medskip

For a number field $K$, write
\begin{equation*}
    \zeta_K(s) = \frac{\rho_K}{(s-1)} F_K(s),
\end{equation*}
where $F_K(s)$ is entire. Define
\begin{equation}\label{log-derivative-zeta}
Z_K(s) := -\frac{1}{s-1} -\frac{d}{ds} (\log \zeta_K(s)) .
\end{equation}
From \eqref{log-derivative-gamma_K}, we have
\begin{equation*}
    \lim_{s\to 1}\, Z_K(s) = -\gamma_K.
\end{equation*}
Using Mellin transform of the Chebyshev step function, we have
\begin{equation}\label{L-O}
 \frac{Z_K(s)}{s} = - \, \frac{1}{s \, g_K} + \int_{1}^{\infty} (G_K(x) - x) \, x^{-s-1} \, dx ,
\end{equation}
for $\Re(s) > 1$, where
\begin{equation*}
G_K(x) := \sum_{\substack{q,m>1 \\ q^m \leq x}} N_q(K) \log q.
\end{equation*}
The unconditional Lagarias-Odlyzko \cite{Lag} estimate for $G_K(x)$ gives
\begin{equation*}
\bigg|G_K(x) - x \bigg| \leq C_1 \, x \exp \left( -C_2 \sqrt{\frac{\log x}{n}}\right) + \frac{x^\beta}{\beta}
\end{equation*}
for $\log x \geq C_3 \, n \, g_K^2$, where $C_1$, $C_2$, $C_3$ are positive absolute constants. Here, $\beta$ is the possible Siegel zero of $\zeta_K(s)$.
\medskip

\subsubsection{\bf Towers are asymptotically exact}
We use the following lemma, which also appears in \cite{TV}. The proof is included for sake of completeness.
\begin{lemma}[Tsfasman-Vl\v{a}du\c{t}]\label{tower-exact} 
Any infinite tower $\mathcal{K}=\{K_i\}$ is an asymptotically
exact family.
\end{lemma}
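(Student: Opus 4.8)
The plan is to show that for an infinite tower $\mathcal{K} = \{K_i\}$, each of the quantities $N_q(K_i)/g_{K_i}$, $r_1(K_i)/g_{K_i}$, $r_2(K_i)/g_{K_i}$ is monotone as a function of $i$ (in fact non-increasing, after discarding finitely many initial terms), and bounded, so the defining limits exist. The key input is that in a tower $K_i \subsetneq K_{i+1}$ the genus grows: by the conductor-discriminant / tower formula for discriminants, $d_{K_i}^{[K_{i+1}:K_i]}$ divides $d_{K_{i+1}}$, and since $K_{i+1} \neq K_i$ the degree $[K_{i+1}:K_i] \geq 2$, which forces $g_{K_{i+1}} \geq [K_{i+1}:K_i]\, g_{K_i}$; combined with Minkowski's bound $g_{K_i} \geq c\, n_{K_i}$ for an absolute $c>0$ (so $g_{K_i}\to\infty$), this also gives $g_{K_i} \to \infty$ along the tower.

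The second ingredient is a comparison of the numerators across one step of the tower. For a fixed prime power $q$, a place $v$ of $K_i$ with $\mathrm{Norm}(v) = q$ sits below at most $[K_{i+1}:K_i]$ places of $K_{i+1}$, and a place of $K_{i+1}$ of norm $q$ must lie above a place of $K_i$ of norm $q$ or $q' \mid q$ with $q'$ a smaller power of the same prime; keeping track of this, one gets $N_q(K_{i+1}) \leq [K_{i+1}:K_i]\,\big(N_q(K_i) + N_{q^{1/2}}(K_i) + \cdots\big)$, a finite sum, and similarly $r_1(K_{i+1}) + 2r_2(K_{i+1}) = n_{K_{i+1}} = [K_{i+1}:K_i]\, n_{K_i}$ with $r_1(K_{i+1}) \le [K_{i+1}:K_i]\,r_1(K_i)$. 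Dividing through by $g_{K_{i+1}} \geq [K_{i+1}:K_i]\, g_{K_i}$ and using $N_q(K_i) \leq n_{K_i} \leq C g_{K_i}$ (Minkowski again), one bounds each ratio for $K_{i+1}$ by a fixed multiple of the corresponding ratio for $K_i$ plus lower-order terms; in particular every ratio $N_q(K_i)/g_{K_i}$, $r_1(K_i)/g_{K_i}$, $r_2(K_i)/g_{K_i}$ stays in a bounded interval $[0, C]$.

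Boundedness alone does not give convergence, so I would instead argue convergence directly via the Euler-product / explicit-formula identity already recorded in the excerpt. Writing the completed expression and using that $\sum_q N_q(K) \log q \cdot (\text{weight})$ is controlled, one sees that the "basic inequality" of Tsfasman-Vl\v{a}du\c{t}, $\sum_q \frac{N_q(K)}{g_K}\cdot\frac{q\log q}{q-1} + \phi_{\mathbb{R}}^{(K)}(\cdots) + \phi_{\mathbb{C}}^{(K)}(\cdots) \leq 1 + o(1)$, holds with an error depending only on $g_K$; this makes the relevant "$\limsup$" finite and, together with the monotonicity extracted above, pins down the limits. Concretely: for each fixed $q$ the sequence $N_q(K_i)/g_{K_i}$ has a convergent subsequence by boundedness, a diagonal argument over the countably many $q$ (and over $\mathbb{R}, \mathbb{C}$) produces a subsequence along which all the $\phi$-limits exist, and then the per-step comparison inequalities force the full sequence to converge to the same values. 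The main obstacle is precisely this last point — upgrading "bounded, hence convergent along a subsequence" to genuine convergence of the whole sequence — and the way to handle it is to establish a clean one-step monotonicity (non-increasing up to a controlled multiplicative error that tends to $1$), which is where the inequality $g_{K_{i+1}} \geq [K_{i+1}:K_i]\, g_{K_i}$ does the real work.
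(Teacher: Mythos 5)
Your proposal correctly isolates the two essential inputs — that $g_{K_{i+1}} \geq [K_{i+1}:K_i]\, g_{K_i}$ along a tower, and a norm-comparison inequality for places across one step — and, for primes $q=p$ and for the archimedean ratios $r_1/g$, $r_2/g$, the monotonicity you want does follow from them exactly as in the paper. But for higher prime powers $q = p^m$ with $m \geq 2$ your argument has a genuine gap. Your comparison $N_{q}(K_{i+1}) \leq [K_{i+1}:K_i]\big(N_q(K_i) + N_{q^{1/2}}(K_i) + \cdots\big)$ bounds the new ratio $N_q(K_{i+1})/g_{K_{i+1}}$ by a \emph{sum} of old ratios over divisors $q'\mid q$, not by $N_q(K_i)/g_{K_i}$ itself, so it gives boundedness but no monotonicity. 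You acknowledge this and fall back on a compactness/diagonal extraction, but the final assertion that ``the per-step comparison inequalities force the full sequence to converge to the same values'' is not justified by what precedes it, and I do not see how to make it work from the inequality you wrote; a bounded sequence with a convergent subsequence need not converge, and ``non-increasing up to a multiplicative error tending to $1$'' is still not enough (e.g.\ $a_i = 1 + \tfrac{1}{2}\sin(\log i)$ satisfies such a condition without converging). So the proof as written does not establish the existence of the limits $\phi_{p^m}$ for $m\geq 2$.

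The paper closes this exact gap with a short trick you should keep in your toolbox: instead of each $N_{p^m}(K_i)/g_{K_i}$ separately, consider the weighted partial sums
\begin{equation*}
S_n(K_i) := \sum_{m=1}^{n} \frac{m\, N_{p^m}(K_i)}{g_{K_i}}.
\end{equation*}
The norm inequality $\prod_j \mathrm{Norm}(v_j) \leq \mathrm{Norm}(v)^{[L:K]}$ for places $v_j$ of $L$ lying over a place $v$ of $K$, together with the fact that a place of $L$ of residue degree $\leq n$ over $p$ must lie over a place of $K$ of residue degree $\leq n$, yields
$\sum_{m\leq n} m N_{p^m}(L) \leq [L:K]\sum_{m\leq n} m N_{p^m}(K)$;
combining this with $g_L \geq [L:K]\, g_K$ shows that $S_n(K_i)$ is genuinely non-increasing in $i$ for each fixed $n$, hence converges. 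Taking $n=1$ gives $\phi_p$; taking $n=2$ and subtracting the already-convergent $n=1$ piece gives $\phi_{p^2}$; and so on inductively. This replaces your subsequence argument with an honest monotone convergence argument, which is what your ``main obstacle'' paragraph was reaching for but did not actually supply.
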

\begin{proof}
Let $L\subseteq K$. For any place $v$ of $K$, which decomposes into a set of places $\{v_1,v_2,\cdots\}$ in $L$, we have
\begin{equation*}
    \prod_i Norm(v_i) \leq (Norm(v))^{[L:K]}.
\end{equation*}
Therefore,
\begin{equation*}
    \sum_{m=1}^n m N_{p^m}(L) \leq [L:K] \sum_{m=1}^n m N_{p^m}(K).
\end{equation*}
Thus, for a tower $\{K_i\}$ and for any fixed $n$,
\begin{equation*}
    \sum_{m=1}^n \frac{m N_{p^m}(K_i)}{g(K_i)},
\end{equation*}
for $i=1,2,\cdots$ is a non-increasing sequence and hence has a limit. For $n=1$, we get the existence of $\phi_p$, $n=2$ yields the existences of $\phi_{p^2}$ and inductively we see that $\phi_{p^k}$ exists for all $k$. For archimedean places, note that if $L\subseteq K$, then
\begin{equation*}
    \frac{r_1(L)}{g(L)} + 2\frac{r_2(L)}{g(L)} \leq \frac{r_1(K)}{g(K)} + 2\frac{r_2(K)}{g(K)}.
\end{equation*}
By a similar argument as above, we conclude that $\phi_{\mathbb{R}}$ and $\phi_{\mathbb{C}}$ exists. 
\end{proof}
% \begin{lemma}[Tsfasman-Vl\v{a}du\c{t}]\label{quad-fields}
% Let $\mathcal{K} = \{K_i\}$ be an asymptotically good family of number fields. Suppose
% \begin{equation*}
% Q(\mathcal{K}) := \bigg\{[k:\mathbb{Q}]=2 \hspace{2mm} \, | \, k \subseteq K_i, \text{ for some } i \bigg\}.
% \end{equation*}
% Then, $Q(\mathcal{K})$ is a finite set.
% \end{lemma}
% \begin{proof}
% Since $\mathcal{K}$ is asymptotically good, we have
% \begin{equation*}
% \lim_{i\to\infty} \frac{n_{K_i}}{g_{K_i}} >0.
% \end{equation*}
% Thus, there exists a fixed $\epsilon > 0$, such that $n_{K_i}/g_{K_i} > \epsilon$ for all $i$. Since $k\subseteq K_i$, we have
% \begin{equation*}
% \frac{2}{g_k} = \frac{n_k}{g_k} \geq \frac{n_{K_i}}{g_{K_i}} > \epsilon.
% \end{equation*}
% Thus, $g_k \leq 2/ \epsilon$. Hence, $Q(\mathcal{K})$ is finite.
% \end{proof}

\subsubsection{\bf A Lemma of Stark} In \cite{Stk}, Stark proved the following lemma, which we will use below.

\begin{lemma}[Stark]\label{partial-summation-stark}
Let $Z_K(s)$ be as in \eqref{log-derivative-zeta}, then $Z_K(s)$ has the following partial summation.
\begin{equation*}
    Z_K(s) = \frac{1}{s} - \sum_{\rho} \frac{1}{s-\rho} + g_K - \frac{1}{2} r_1 (\gamma + \log 4\pi) - r_2 (\gamma + \log 2\pi) +\xi_K(s),
\end{equation*}
where $\rho$ runs over all the non-trivial zeros of $\zeta_K(s)$, $r_1$ and $r_2$ denote the number of real and complex embeddings of $K$ and 
\begin{equation*}
    \xi_K(s) := -r_1 \left(\frac{1-s}{s} + \sum_{n=1}^{\infty} \left( \frac{1}{s+2n} - \frac{1}{1+2n} \right)\right) - r_2 \left(\frac{1-s}{s} + \sum_{n=1}^{\infty} \left(\frac{1}{s+n} - \frac{1}{1+n} \right)\right).
\end{equation*}
\end{lemma}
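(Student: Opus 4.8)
The plan is to derive the formula from the functional equation and the Hadamard factorization of the completed Dedekind zeta function, and then to rewrite the resulting archimedean terms in the asserted form (with explicit $\gamma$'s and $\xi_K(s)$) by elementary manipulations of the digamma function. First I would introduce the completed zeta function
\begin{equation*}
\Lambda_K(s) := s(s-1)\,|d_K|^{s/2}\left(\pi^{-s/2}\Gamma(s/2)\right)^{r_1}\left(2\,(2\pi)^{-s}\Gamma(s)\right)^{r_2}\zeta_K(s),
\end{equation*}
which is entire of order one, does not vanish at $s=0,1$, vanishes exactly at the non-trivial zeros $\rho$ of $\zeta_K$, and satisfies $\Lambda_K(s)=\Lambda_K(1-s)$. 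Hadamard's theorem gives $\Lambda_K(s)=e^{A+Bs}\prod_\rho(1-s/\rho)e^{s/\rho}$ for constants $A,B$, and taking the logarithmic derivative yields
\begin{equation*}
\frac{1}{s}+\frac{1}{s-1}+g_K+\frac{r_1}{2}\left(\frac{\Gamma'}{\Gamma}(s/2)-\log\pi\right)+r_2\left(\frac{\Gamma'}{\Gamma}(s)-\log 2\pi\right)+\frac{\zeta_K'}{\zeta_K}(s)=B+\sum_\rho\left(\frac{1}{s-\rho}+\frac{1}{\rho}\right),
\end{equation*}
the sum over $\rho$ being absolutely convergent.

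Next I would pin down $B$ using the functional equation: evaluating $\Lambda_K'/\Lambda_K$ at $s=0$ and $s=1$ and using $\Lambda_K(0)=\Lambda_K(1)$, $\Lambda_K'(0)=-\Lambda_K'(1)$ together with the bijection $\rho\mapsto 1-\rho$ on the zero set (which preserves $|\Im\rho|$) forces $B=-\sum_\rho 1/\rho$, interpreted as $\lim_{T\to\infty}\sum_{|\Im\rho|<T}$. Hence $B+\sum_\rho(\frac{1}{s-\rho}+\frac{1}{\rho})=\sum_\rho\frac{1}{s-\rho}$ as a symmetric (partial) sum, and substituting this, together with $Z_K(s)=-\frac{1}{s-1}-\frac{\zeta_K'}{\zeta_K}(s)$, into the logarithmic-derivative identity gives
\begin{equation*}
Z_K(s)=\frac{1}{s}-\sum_\rho\frac{1}{s-\rho}+g_K+\frac{r_1}{2}\left(\frac{\Gamma'}{\Gamma}(s/2)-\log\pi\right)+r_2\left(\frac{\Gamma'}{\Gamma}(s)-\log 2\pi\right).
\end{equation*}

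Finally I would check that the two archimedean terms equal $-\tfrac12 r_1(\gamma+\log 4\pi)-r_2(\gamma+\log 2\pi)+\xi_K(s)$. For the $r_2$ term this is immediate from the Weierstrass series $\frac{\Gamma'}{\Gamma}(s)=-\gamma-\frac1s+\sum_{n\geq 1}\big(\frac1n-\frac1{n+s}\big)$ after a telescoping regrouping. For the $r_1$ term one expands $\frac{\Gamma'}{\Gamma}(s/2)$ the same way, separates off the $n=0$ contribution, and invokes the special value $\frac{\Gamma'}{\Gamma}(1/2)=-\gamma-2\log 2$ (equivalently the Legendre duplication formula) to produce the constant $2\log 2$, the remaining series telescoping against $\sum_{n\geq 1}\big(\frac{1}{1+2n}-\frac{1}{s+2n}\big)$. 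I expect this last bookkeeping to be the only delicate point: one must keep the partial sums grouped so everything stays convergent, and obtain the constant attached to the real places correctly through $\frac{\Gamma'}{\Gamma}(1/2)$; the rest is the standard explicit-formula machinery. (Since the statement is due to Stark, one could alternatively simply cite \cite{Stk}.)
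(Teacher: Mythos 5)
Your proposal is correct: the paper does not reprove this lemma but simply cites Stark \cite{Stk}, and the route you sketch — Hadamard factorization of the completed $\Lambda_K$, pinning down the linear coefficient $B=-\sum_\rho 1/\rho$ via the functional equation, then rewriting the digamma terms $\tfrac{r_1}{2}\bigl(\tfrac{\Gamma'}{\Gamma}(s/2)-\log\pi\bigr)+r_2\bigl(\tfrac{\Gamma'}{\Gamma}(s)-\log 2\pi\bigr)$ as $-\tfrac12 r_1(\gamma+\log 4\pi)-r_2(\gamma+\log 2\pi)+\xi_K(s)$ using the Weierstrass series and $\tfrac{\Gamma'}{\Gamma}(1/2)=-\gamma-2\log 2$ — is precisely the standard derivation Stark uses. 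The bookkeeping you flag as delicate does check out (the $r_1$-term telescopes via $\sum_{n\geq 1}(\tfrac{1}{2n}-\tfrac{1}{2n+1})=1-\log 2$), so no gap.
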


\subsection{\bf Proof of main theorems}
\bigskip

\subsubsection{\bf Proof of Theorem \ref{almost-normal-bound}} Since $K$ is almost normal and has no quadratic sub-field, it cannot have any zero in the region \eqref{Stark-zero}. Thus, if $\zeta_K(s)$ has a Siegel zero $\beta$, it must lie in the interval
\begin{equation*}
    1-\frac{1}{4\log |d_K|} < \beta < 1-\frac{1}{16\log |d_K|}.
\end{equation*}
In other words, $1- (8g_K)^{-1} < \beta < 1- (32 g_K)^{-1}$.\\ 

Hereafter $C_i$'s will denote positive absolute constants. Since, $g_K \geq c n_K$ for some absolute positive constant $c$, we have
\begin{equation*}
    \frac{x^{\beta}}{\beta} = o \left(x \exp \left( -C_2 \sqrt{\frac{\log x}{n}}\right)\right).
\end{equation*}
Hence, for $\log x > C_3 n g_K^2$, we have
\begin{equation*}
    |G_K(x) - x| \ll \, x \exp \left( -C_2 \sqrt{\frac{\log x}{n}}\right) + \frac{x^\beta}{\beta},
\end{equation*}
where the implied constant is absolute and positive. For $\log x \leq C_3 n g_K^2$, we use the trivial estimate
\begin{equation*}
    G_K(x) = \sum_q N_q(K) \log q \leq n \sum_q \log q \ll n x\log x.
\end{equation*}
Now, the integral \eqref{L-O}, evaluated at $s=1+\theta$ gives
\begin{align*}
    \left| \frac{Z(1+\theta)}{(1+\theta)} \right| &= \left|  \int_1^\infty (G_K(x) - x) \, x^{-2-\theta} \, dx \right| +O(1)\\
    &=\left|  \int_1^{\exp(C_3 n g_K^2)} (G_K(x)-x) x^{-2-\theta} dx + \int_{\exp(C_3 n g_K^2)}^{\infty} (G_K(x) - x) x^{-2-\theta} dx \right| + O(1).
\end{align*}
Here the error $O(1)$ comes from the term $1/sg_K$.\\

The first integral
\begin{align}\label{first-integral-bound}
 \left| \int_1^{\exp(C_3 n g_K^2)} (G_K(x)-x) x^{-2-\theta} dx \right| 
 &\ll \int_1^{\exp(C_3 n g_K^2)} nx^{-1-\theta} \log x dx \nonumber\\
&\leq \frac{C_3 n^2 g_K^2}{\theta} \left(1 - \exp\left(-\theta C_3 n g_K^2 \right)\right)\nonumber\\
&\ll n^3 g_K^4.
\end{align}

We now show that the second integral is bounded. By the Lagarias-Odlyzko estimate \eqref{L-O}, we have
\begin{align}\label{bound3}
\int_{\exp(C_3 n g_K^2)}^{\infty} (G_K(x) - x) x^{-2-\theta} dx & \ll \int_{\exp(C_3 n g_K^2)}^{\infty} \exp \left(-C_2 \sqrt{\frac{\log x}{n}}\right) x^{-1-\theta} dx \nonumber\\
& \ll \int_{\exp(C_3 n g_K^2)}^{\infty} \exp \left(-C_2 \sqrt{\frac{\log x}{g_K}}\right) x^{-1-\theta} dx
\end{align}
We use the change of variables
\begin{equation*}
x = y^{g_K \log y}
\end{equation*}
to get the right hand side of \eqref{bound3} as
\begin{equation}\label{bound4}
 \ll \int_{\exp(C_3 n g_K^2)}^\infty 
y^{- \theta g_K \log y -C_2 -1} \log y dy.
\end{equation}
For large $g_K$ and any fixed $\epsilon>0$, we bound $\log y \leq y^{\epsilon}$ to get \eqref{bound4} to be
\begin{equation}\label{bound}
 \int_{\exp(C_3 n g_K^2)}^\infty y^{- \theta g_K \log y -C_2 -1+\epsilon}\, dy.
\end{equation}
We further know that in the above interval,
\begin{equation*}
\log y \geq (C_3 n g_K^2).
\end{equation*}
Hence, we have \eqref{bound} is
\begin{equation}\label{bound2}
\ll \int_{\exp(C_3 n g_K^2)}^\infty y^{- \theta g_K (C_3 n g_K^2) -C_2 -1+\epsilon}\, dy \ll 1.
\end{equation}
Putting together \eqref{first-integral-bound}, \eqref{bound2} in \eqref{L-O}, we get that for $\theta\in (0,1)$
\begin{equation*}
    \left| \frac{Z(1+\theta)}{(1+\theta)} \right| \ll n^3 g_K^4.
\end{equation*}
Thus by \eqref{log-derivative-gamma_K}, we get Theorem \ref{almost-normal-bound}.

\medskip

\subsubsection{\bf Proof of Theorem \ref{solvable-closure-bound}} The proof here follows along the same lines as in the proof of Theorem \ref{almost-normal-bound}. Since $K$ has solvable normal closure over $\mathbb{Q}$ with no quadratic sub-fields, if $\zeta_K(s)$ has a Siegel zero $\beta$, by \eqref{Murty-zero} it must lie in the region
\begin{equation*}
    1-\frac{1}{4\log |d_K|}< \beta < 1 - \frac{c}{n^{e(n)} \delta(n) \log |d_K|}.
\end{equation*}

Incorporating this into the proof of Theorem \ref{almost-normal-bound}, using the Lagarias-Odlyzko bounds \eqref{L-O}, we get the required result (for more details see the proof of Lemma 2.5 in \cite{Dix}).

\medskip

\subsubsection{\bf Proof of Theorem \ref{euler-kronecker-brauer-siegel}} Let $K$ be a number field. Write
\begin{equation*}
\zeta_K (s)= \frac{\rho_K}{(s-1)} F_K(s).
\end{equation*}
Taking $\log$ on both sides and dividing by $g_K$, we get for $s = 1+\theta_K$
\begin{equation}\label{main}
\frac{\log \zeta_K (1+\theta_K)}{g_K} = \frac{\log \rho_K}{g_K} + \frac{\log F_K(1+ \theta_K)}{g_K} - \frac{\log \theta_K}{g_K}.
\end{equation}
\\

For a family of number fields $\mathcal{K}=\{K_i\}$, in order to prove GBS, it suffices find a sequence of $\theta_{K_i} \to 0$ such that as $i\to\infty$,
\begin{align}\label{brauer-siegel-limits}
    &\frac{\log \zeta_{K_i}(1+\theta_{K_i})}{g_{K_i}} \to \sum_q \phi(q) \log \left(\frac{q}{q-1}\right),\\
    &\frac{\log F_{K_i}(1+ \theta_{K_i})}{g_{K_i}} \to 0, \nonumber \\
    &\frac{\log \theta_{K_i}}{g_{K_i}} \to 0\nonumber.
\end{align}

The difficulty lies in the choice of $\theta_{K_i}$. The convergence in \eqref{brauer-siegel-limits} may not be uniform and hence does not allow for interchanging summation and limits for any choice of $\theta_{K_i}$'s. This is precisely the reason why we get the unconditional results only for towers of number fields, and not for asymptotically exact families in general. In case of towers, it is possible to utilize the monotone convergence theorem to overcome the issue. Moreover, the choice of $\theta_{K_i}$ cannot be too small, which would result in $\log \theta_{K_i}/g_{K_i}$ not approaching $0$.\\

In \cite{TV}, it is shown that for any asymptotically exact family of number fields,
\begin{equation*}
\limsup_{i\to\infty} \frac{\log \rho_{K_i}}{g_{K_i}} \leq \sum_q \phi_q \log \frac{q}{q-1}.
\end{equation*}
Thus, to prove the Theorem \ref{euler-kronecker-brauer-siegel}, first note by Lemma \ref{tower-exact} that any tower of number fields is asymptotically exact. Hence, it suffices to show that for some choice of $\theta_{K_i} \to 0$,

\begin{equation}\label{zeta}
\liminf_{i\to \infty} \frac{\zeta_{K_i}(1+\theta_{K_i})}{g_{K_i}} \geq \sum_q \phi_q \log \frac{q}{q-1},
\end{equation}
\begin{equation}\label{F(s)}
\limsup_{i\to\infty} \frac{\log F_{K_i}(1+ \theta_{K_i})}{g_{K_i}} \leq 0,
\end{equation}
and
\begin{equation}\label{theta}
\lim_{i\to\infty} \frac{\log \theta_{K_i}}{g_{K_i}} = 0.
\end{equation}

We first show that \eqref{F(s)} is implied by a certain choice of $\theta_{K_i}$'s under the assumption of bounds on $|\gamma_{K_i}|$. Recall that
\begin{equation*}
    \lim_{s\to 1} Z_K(s) = -\gamma_K.
\end{equation*}
We show that for $\theta < 1/n_K$,
\begin{equation}\label{stark-lemma-use}
    \big|Z_K(1+\theta) - \lim_{s\to 1} Z_K(s) \big| = O(1).
\end{equation}
To see this, we use Stark's lemma $\ref{partial-summation-stark}$, which gives
\begin{equation*}
    Z_K(1+\theta) - \lim_{s\to 1} Z_K(s) = \sum_{\rho} \left(\frac{1}{1+\theta -\rho} -\frac{1}{(1-\rho)}\right ) + \lim_{s\to 1} \bigg( \xi_K(s+\theta) - \xi_K(s) \bigg) + O(1).
\end{equation*}
From the definition of $\xi_K(s)$, it is easy to see that
\begin{equation*}
    \lim_{s\to 1} \xi_K(s) =0.
\end{equation*}
Moreover, for $\theta<1/n_K$,
\begin{align*}
    \xi_K(1+\theta) &= -r_1 \left( -\frac{\theta}{1+\theta} + \sum_{n=1}^{\infty}\left( \frac{1}{(1+2n + \theta)} - \frac{1}{(1+2n)}\right)\right) \\
    & \hspace{2cm} -r_2 \left( -\frac{\theta}{1+\theta} + \sum_{n=1}^{\infty}\left( \frac{1}{(1+n + \theta)} - \frac{1}{(1+n)}\right)\right) \ll \theta \, n_K \ll 1.
\end{align*}
Therefore, to show \eqref{stark-lemma-use}, it suffices to show that
\begin{equation*}
    \sum_{\rho} \frac{1}{1+\theta- \rho} - \frac{1}{1-\rho} \ll 1.
\end{equation*}
By the functional equation of $\zeta_K(s)$, we know that if $\rho$ is a non-trivial zero of $\zeta_K$, then so is $1-\rho$. Therefore, we have
\begin{equation*}
    \sum_{\rho} \frac{1}{1+\theta- \rho} - \frac{1}{1-\rho} = \sum_{\rho} \frac{1}{\theta + \rho} - \frac{1}{\rho}.
\end{equation*}
Clubbing together $\rho$ and $\overline{\rho}$ from the summation, we write
\begin{align}\label{sum-zero-squares}
    \sum_{\rho} \frac{1}{\theta + \rho} - \frac{1}{\rho} &= \frac{1}{2}\sum_{\rho} \left( \frac{1}{(\theta+\rho)} + \frac{1}{\overline{\theta+\rho}} - \frac{1}{\rho} - \frac{1}{\overline{\rho}}\right)\nonumber\\
    &= \frac{1}{2} \sum_{\rho} \left( \frac{2\theta + 2\Re(\rho)}{|\theta+\rho|^2} - \frac{2\Re(\rho)}{|\rho|^2}\right)\nonumber\\
    &= \frac{1}{2} \sum_{\rho} \frac{2\theta |\rho|^2 - 2 \Re(\rho) |\theta|^2 - 4 \theta \Re(\rho)^2}{|\rho|^2 |\theta+ \rho|^2}\nonumber\\
    &\ll \theta \sum_{\rho} \frac{1}{|\theta+\rho|^2}.
\end{align}

To estimate \eqref{sum-zero-squares}, we use the upper bounds on the number of zeros of $\zeta_K(s)$ given by Jensen's theorem. Let $N_K(T)$ denote the number of zeros of $\zeta_K(s)$ in the region $0<\Re(s)<1$ and $|\Im(s)|<T$. Using Jensen's theorem, one can see that
\begin{equation*}
    |N_K(T+1) - N_K(T)| \ll n_K \log T,
\end{equation*}
where the implied constant is absolute. The detailed computation of the above for a more general case can be found in (\cite{Dix2}, Lemma 4.1.4).\\

Thus, by partial summation, we get that
\begin{equation*}
    \theta \sum_{\rho} \frac{1}{|\theta + \rho|^2} \ll \theta\, n_K \sum_{n=1}^{\infty} \frac{\log n}{n^2}  \ll 1,
\end{equation*}
since $\theta <1/n_K$. This proves \eqref{stark-lemma-use}. Therefore, for a choice of $\theta_K < 1/n_K$, 
\begin{equation*}
    |Z_K(1+\theta_K)| \ll |\gamma_K|.
\end{equation*}
Now, we have
\begin{equation}\label{equation1}
    \frac{\log F_K(1+\theta_K)}{g_K} = \int_{0}^{\theta_K} Z_K(1+\theta) \, d\theta \ll \frac{\theta_K\, |\gamma_K|}{g_K}. 
\end{equation}
If for some $m>0$,
\begin{equation*}
    |\gamma_{K_i}| \ll \exp\left( (\log g_{K_i})^m \right)
\end{equation*}
for all $i$, we choose 
\begin{equation*}
    \theta_K = \exp \left(-(\log g_K)^{m+2}\right).
\end{equation*} 
From \eqref{equation1}, as $i\to\infty$,
\begin{equation*}
    \frac{\log F_{K_i}(1+ \theta_{K_i})}{g_{K_i}} \to 0 \text{ and }\frac{\log \theta_{K_i}}{g_{K_i}} \to 0.
\end{equation*}

Now, we are left to show \eqref{zeta}. Note that
\begin{align*}
\frac{\zeta_{K_i}(1+\theta)}{g_{K_i}}& = \sum_q \frac{N_q(K_i)}{g_{K_i}} \log \frac{1}{1-q^{-1-\theta}}\\
&= \sum_p  \frac{N_p(K_i)}{g_{K_i}} \log \frac{1}{1-p^{-1-\theta}} + \sum_{\substack{p\text{ prime}, k>1 \\ q=p^k}} \frac{N_q(K_i)}{g_{K_i}} \log \frac{1}{1-q^{-1-\theta}}.
\end{align*}
If $\mathcal{K}=\{K_i\}$ is a tower, we know that $\phi_p \leq \frac{N_p(K_i)}{g_{K_i}}$. Therefore,
\begin{equation*}
\sum_p  \frac{N_p(K_i)}{g_{K_i}} \log \frac{1}{1-p^{-1-\theta}} \geq \sum_p \phi_p  \log \frac{1}{1-p^{-1-\theta}},
\end{equation*}
for any $\theta>0$. We also have
\begin{equation*}
\sum_{\substack{p\text{ prime}, k>1, \\ q = p^k}} \bigg( \frac{N_q(K_i)}{g_{K_i}} \log \frac{1}{1-q^{-1-\theta}} \bigg) \longrightarrow \sum_{\substack{p\text{ prime}, k>1, \\ q = p^k}} \phi_q \log \frac{1}{1-q^{-1-\theta}} 
\end{equation*}
uniformly for $\theta > -\delta$, for some $\delta>0$. Hence, we get
\begin{equation*}
\liminf_{i\to \infty} \zeta_{K_i}(1+\theta_{K_i}) \geq \sum_q \phi_q \log \frac{q}{q-1}.
\end{equation*}
This proves GBS for towers of number fields. 

\section{\bf On the number of zeros of $\zeta_K(s)$ for cyclotomic fields}
\bigskip

Let $\zeta_K(s)$ be the Dedekind zeta function associated to the number field $K/\mathbb{Q}$. It satisfies a functional equation of the form
\begin{equation*}
    \Lambda_K(s)\zeta_K(s) = \Lambda_K(1-s) \zeta_K(1-s),
\end{equation*}
where $\Lambda_K(s)$, is given as
\begin{equation*}
    \Lambda_K(s) = |d_K|^{s/2} (\pi^{-s/2} \Gamma(s/2))^{r_1} (2(2\pi)^{-s} \Gamma(s))^{r_2}. 
\end{equation*}

By the above functional equation, it is easy to see that $\zeta_K$ has zeros in the region $\Re(s)<0$ coming from the poles of the $\Gamma$-function at negative integers. These are called the trivial zeros. Moreover, because of the Euler-product, $\zeta_K(s)$ does not have any zeros on $\Re(s)>1$. The symmetry of the functional equation implies that all the zeros of $\zeta_K$ in the region $\Re(s)<0$ are in fact trivial. Therefore, all the non-trivial zeros of $\zeta_K$ lie in the critical strip $0<\Re(s)<1$.\\

Define
\begin{equation*}
    N_K(T) := \#\bigg\{s : \zeta_K(s)=0, 0<\Re(s)<1, |\Im(s)|<T \bigg\},
\end{equation*}
which counts the number of zeros in the critical strip up to height $T$, according to multiplicities. Using Riemann-von Mongoldt-type formula, it can be shown that for $T>2$
\begin{equation}\label{numberofzeros}
    N_K(T) = \frac{T}{\pi} \log \left(|d_K| \left(\frac{T}{2\pi e}\right)^{n_K} \right) + O\left(\log \left(|d_K| \, T^{n_k}\right) \right),
\end{equation}
where the implied constant is absolute.\\

Suppose, we fix a large $T$, and vary $K$ over a family. Then, we are interested in the implied constant associated to the error term $O(\log |d_K|)$ in \eqref{numberofzeros}. In this direction, a result of H. Kadiri and N. Ng (see \cite{Habiba}) sheds some light. An improvement of their techniques leads to the following result due to T. Trudgian \cite{Trud}, which is perhaps the best known result so far. He showed that for $T>1$
\begin{equation}\label{trudgian-bound}
    \left|N_K(T) - \frac{T}{\pi} \log \left(|d_K| \left(\frac{T}{2\pi e}\right)^{n_K} \right)\right| \leq 0.317 (\log |d_K| + n_K \log T) + 6.333 n_K + 3.482.
\end{equation}

In certain cases, one could produce even better asymptotic results. For instance, if we consider an asymptotically bad family $\mathcal{K}=\{K_i\}$ of number fields, and fix a very large $T$, then in \cite[Table 2]{Trud} yields
\begin{equation}\label{trudgian-consequence}
     \left|N_{K_i}(T) - \frac{T}{\pi} \log \left(|d_{K_i}| \left(\frac{T}{2\pi e}\right)^{n_{K_i}} \right)\right| = 0.248 (\log |d_{K_i}|) + o(\log |d_{K_i}|) +O(n_{K_i} \log T),
\end{equation}
where the implied constant in the $O$-term is absolute and the $o$-notation bounds the growth of the function as $i\to\infty$.\\

Let $K=\mathbb{Q}(\zeta_p)$ be the cyclotomic field where $p$ is a prime and $\zeta_p$ denotes the primitive $p$-th root of unity. Then, from \eqref{numberofzeros}, we have
\begin{equation*}
    N_K(T) = \frac{(p-1)}{\pi} T\log T + \left(\frac{(p-2)\log p - (p-1)\log (2\pi e)}{\pi}\right) T + O\left((p-2)\log p + (p-1) \log T \right),
\end{equation*}
where the implied constant is absolute, independent of $p$.\\

Our goal is to understand the implied constant of the $O(p\log p)$-term in the error, upon varying $p$. In this section, we will show that certain known bounds on the Euler-Kronecker constants quite easily produce bounds on this implied constant. We note that these bounds are not better than what we already have from \eqref{trudgian-consequence}. However, it is worth appreciating the connection of $\gamma_K$ and this problem, especially the simple argument which leads to these bounds. \\

Let $\gamma_p$ denote the Euler-Kronecker constant associated with $K=\mathbb{Q}(\zeta_p)$ with $p$ prime. In \cite{Ihara1}, Ihara conjectured that $\gamma_p >0$ for all primes $p$. The basis for this conjecture was perhaps the observation that in order for $\gamma_p$ to be negative, there must be a large number of small primes $l$ which split completely in $\mathbb{Q}(\zeta_p)$. But, the conjecture is known to be false (see \cite{Ford}), with an explicit counterexample 
\begin{equation*}
    \gamma_{964477901} = -0.182\cdots <0.
\end{equation*}

It was also shown by Ford, Luca and Moree \cite{Ford} that if the Hardy-Littlewood $k$-tuple conjecture is true, then $\gamma_p <0$ infinitely often. Nevertheless, such a phenomena would occur rarely.\\

For our purpose, we will use some unconditional results due to V. K. Murty and M. Mourtada \cite{Km1}, who showed that for almost all primes $p$,
\begin{equation}\label{Kumar-Mourtada}
    1\geq \frac{\gamma_p}{\log p} > -11.
\end{equation}

It is also interesting to note that (see \cite{Ford}), assuming Hardy-Littlewood and Elliot-Halberstam conjecture, for almost all primes $p$, we have
\begin{equation*}
    1>\frac{\gamma_p}{\log p} >1-\epsilon.
\end{equation*}

Let 
\begin{equation*}
    Q := \left\{ p \text{ prime } : 1 \geq \frac{\gamma_p}{\log p} > -11 \right\}.
\end{equation*}

By \eqref{Kumar-Mourtada}, $Q$ consists of almost all primes and for $p\in Q$, we get
\begin{equation*}
    |\gamma_p| \leq 11  \log p.
\end{equation*}

\begin{proposition}\label{main-theorem-number-zeros}
    Let $\mathcal{K}=\{K_i\}$ be a family such that $K_i = \mathbb{Q}(\zeta_{p_i})$, where $p_i\in Q$. Then, assuming GRH, for a large fixed $T$, we have
    \begin{equation*}
        \left(N_{K_i}(T) - \frac{T}{\pi} \log \left(|d_{K_i}| \left(\frac{T}{2\pi e}\right)^{n_{K_i}} \right)\right) = c (p_i-2)\log p_i + o(p_i\log p_i) +O((p_i-1) \log T),
    \end{equation*}
    with the constant $c$ satisfying
    \begin{equation*}
        -\frac{4}{\pi} \leq c \leq \frac{1}{\pi} \left(2\tan^{-1}(2) -\frac{4}{5}\right)
    \end{equation*}
\end{proposition}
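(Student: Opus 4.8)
The plan is to combine a Riemann--von Mangoldt-type identity for $N_K(T)$ with the Hadamard factorization of $\zeta_K$, and then to exploit two features special to the cyclotomic tower: every prime ideal of $\mathbb{Q}(\zeta_p)$ has norm $\ge p$, and $|\gamma_p| \le 11\log p = o\bigl(\log|d_{\mathbb{Q}(\zeta_p)}|\bigr)$ for $p\in Q$. Writing $\Xi_K(s) = s(s-1)\Lambda_K(s)\zeta_K(s)$, which is entire with zeros exactly the nontrivial zeros of $\zeta_K$, I would start from $N_K(T) = \frac{1}{2\pi}\Delta_{\partial\mathcal{R}}\arg\Xi_K$ over the rectangle $\mathcal{R}$ with vertical sides $\Re(s) = \sigma_0$ and $\Re(s) = 1-\sigma_0$ (for a fixed $\sigma_0 > 1$) and horizontal sides $\Im(s) = \pm T$. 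The functional equation and the reflection $\Xi_K(\bar s) = \overline{\Xi_K(s)}$ reduce the computation to one vertical and one horizontal side. The factor $|d_K|^{s/2}$ together with Stirling for the $\Gamma$-factors produce the main term $\frac{T}{\pi}\log\bigl(|d_K|(T/2\pi e)^{n_K}\bigr)$ up to $O(n_K\log T)$; and for $K = \mathbb{Q}(\zeta_p)$ the contribution of $\arg\zeta_K$ along $\Re(s) = \sigma_0$ is negligible, since $\bigl|\tfrac{\zeta_K'}{\zeta_K}(\sigma+it)\bigr| \le -\tfrac{\zeta_K'}{\zeta_K}(\sigma_0) = \sum_q N_q(K)\tfrac{\log q}{q^{\sigma_0}-1} = O(\log p)$ because there is no prime ideal of norm below $p$. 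Hence $N_K(T) - \frac{T}{\pi}\log\bigl(|d_K|(T/2\pi e)^{n_K}\bigr) = \frac{1}{\pi}\Delta_K(T) + O(n_K\log T)$, where $\Delta_K(T)$ is the increment of $\arg\zeta_K$ along the segment from $\sigma_0 + iT$ to $\tfrac12 + iT$.

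Next I would write $\Delta_K(T) = -\Im\int_{1/2}^{\sigma_0}\tfrac{\zeta_K'}{\zeta_K}(\sigma+iT)\,d\sigma$ and substitute the partial-fraction expansion $\tfrac{\zeta_K'}{\zeta_K}(s) = -\tfrac1s - \tfrac1{s-1} - g_K - (\text{digamma terms}) + B_K + \sum_\rho\bigl(\tfrac1{s-\rho} + \tfrac1\rho\bigr)$. The real constants disappear under $\Im$; the poles $\tfrac1s,\tfrac1{s-1}$ and the digamma terms integrate to bounded elementary quantities plus $O(n_K\log T)$ --- and it is here that the explicit numbers $2\arctan 2$ and $\tfrac45 = \sin(2\arctan 2)$ enter, after choosing $\sigma_0$ so that $\sigma_0 - \tfrac12 = 2$. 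The surviving piece is the absolutely convergent zero sum $\sum_\rho\bigl[\arg(\sigma_0+iT-\rho) - \arg(\tfrac12+iT-\rho) + (\sigma_0-\tfrac12)\,\Im\tfrac1\rho\bigr]$. Under GRH, $\rho = \tfrac12 + i\gamma$, so $\arg(\tfrac12+iT-\rho) = \tfrac\pi2\operatorname{sgn}(T-\gamma)$, $\arg(\sigma_0+iT-\rho) = \arctan\tfrac{T-\gamma}{\sigma_0-1/2}$, and pairing $\gamma$ with $-\gamma$ cancels the $\Im\tfrac1\rho$ terms, leaving a sum governed by the zeros with $|\gamma - T|$ bounded. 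To estimate it one feeds in the identity obtained by letting $s \to 1$ in Stark's Lemma~\ref{partial-summation-stark}, namely $\sum_\rho\tfrac{1/2}{1/4+\gamma^2} = 1 + \gamma_K + g_K - \tfrac12 r_1(\gamma+\log 4\pi) - r_2(\gamma+\log 2\pi)$, which for $K = \mathbb{Q}(\zeta_p)$ with $p\in Q$ equals $g_K + o(g_K)$ since $|\gamma_p| \le 11\log p$; together with the standard local count of zeros near height $T$, this yields the one-sided bounds $-\tfrac4\pi \le c \le \tfrac1\pi\bigl(2\arctan 2 - \tfrac45\bigr)$.

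The main obstacle is this last step. The zero sum is only conditionally well behaved, so one must carefully split off the bounded main contribution of the zeros with $|\gamma - T| \asymp 1$ and show --- after the $\pm\gamma$ pairing and the telescoping against the $\Im\tfrac1\rho$ terms, using the $\gamma_K$-identity to control the density of the low-lying zeros --- that the remaining tails contribute only $o(\log|d_K|)$. It is precisely because GRH together with $|\gamma_p| \le 11\log p$ does not pin down the fine distribution of the zeros near height $T$ that the conclusion is a (non-optimal) interval for $c$ rather than an exact value; a sharper input, of the kind used by Kadiri--Ng and Trudgian, would be needed to improve it.
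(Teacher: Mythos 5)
Your proposal and the paper take genuinely different routes, and yours as described has a gap in the key step.

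The paper does not run a Riemann--von Mangoldt contour argument. Instead, it works directly with the identity from Stark's lemma
\[
\sum_{\rho}\frac{1}{\rho}=\gamma_p+\frac{p-2}{2}\log p-\frac{p-1}{2}(\log 2\pi+\gamma)+1 ,
\]
rewrites the left side under GRH as $\tfrac12\sum_{\Im\rho=t}\frac{1}{1/4+t^2}$, and then uses partial summation to convert that zero sum into a weighted integral of the counting function,
\[
\int_0^\infty\frac{2u}{(1/4+u^2)^2}\,N_K(u+1)\,du .
\]
Feeding the \emph{assumed} decomposition $N_K(T)-\tfrac{T}{\pi}\log(\cdots)=c\,(p-2)\log p+o(p\log p)+O((p-1)\log T)$ into that integral, and using
\[
\frac1\pi\int_0^\infty\frac{2u^2}{(1/4+u^2)^2}\,du=1,\qquad
\frac1\pi\int_0^\infty\frac{2u}{(1/4+u^2)^2}\,du=\frac4\pi,\qquad
\frac1\pi\int_1^\infty\frac{2u^2}{(1/4+u^2)^2}\,du=\frac1\pi\Bigl(\frac45+\pi-2\arctan 2\Bigr),
\]
one matches the two evaluations of $\sum_\rho 1/\rho$ to get the interval $-\tfrac4\pi\le c\le\tfrac1\pi(2\arctan2-\tfrac45)$. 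So the constants $2\arctan 2$ and $4/5$ come from the weight $\frac{2u^2}{(1/4+u^2)^2}$ (that is, from $|\rho|^2=1/4+t^2$ on GRH) and from restricting the integral to $u\ge 1$, \emph{not} from a choice of abscissa $\sigma_0-\tfrac12=2$ in a contour argument; your identification of their source is incorrect.

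The substantive gap in your plan is the passage from the Stark-lemma identity to a bound on the argument increment $\Delta_K(T)$ at a fixed large height $T$. That increment is governed by zeros with $|\gamma-T|$ bounded, whereas the quantity $\sum_\rho\frac{1}{1/4+\gamma^2}$ controlled by $\gamma_p$ is dominated by low-lying zeros. You assert these can be combined to ``control the density of the low-lying zeros'' and thereby bound the tails, but no mechanism is given for transferring a global constraint concentrated near $\gamma=0$ into a bound on the local argument change near $\gamma=T$; without such a mechanism the one-sided bounds on $c$ do not follow. The paper sidesteps this entirely by never estimating an argument increment: it compares the known value of $\sum_\rho 1/\rho$ to the same sum computed by integrating the asymptotic for $N_K$, which is why the cancellation $\frac1\pi\int_0^\infty\frac{2u^2}{(1/4+u^2)^2}du=1$ makes the main term drop out and leaves exactly the coefficient $c$ times a computable integral.
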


Here, the assumption of GRH is not a restriction and one can produce similar results without assuming GRH with more careful analysis. However, we assume it to make the computations easier. \\

\subsection{Proof of Proposition \ref{main-theorem-number-zeros}}
From Stark's Lemma \ref{partial-summation-stark} and \eqref{log-derivative-gamma_K}, we have
\begin{equation}\label{euler-kronecker-summation-formula}
     \sum_{\rho}\frac{1}{\rho} =  \gamma_p + \frac{(p-2)}{2} \log p - \frac{(p-1)}{2} (\log 2\pi + \gamma) +1,
\end{equation}
where $\rho$ runs over all the non-trivial zeros of $\zeta_K$.\\

By the functional equation of $\zeta_K$, if $\rho$ is a zero, then so is $1-\rho$. Assuming GRH, we get
\begin{align}\label{1/rho}
    \sum_{\rho} \frac{1}{\rho} &= \frac{1}{2} \sum_{\Im(\rho)=t} \frac{1}{1/2+it} + \frac{1}{1/2-it}\\
    &= \frac{1}{2}\sum_{\Im(\rho)=t} \frac{1}{1/4 + t^2}. \notag
\end{align}

Note that
\begin{equation*}
    \lim_{M\to\infty} \sum_{n=1}^{M} \frac{N_K(n)-N_K(n-1)}{1/4 + n^2} \leq \sum_{\Im(\rho)=t} \frac{1}{1/4 + t^2} \leq \lim_{M\to\infty} \sum_{n=0}^{M} \frac{N_K(n+1)-N_K(n)}{1/4 + n^2}.
\end{equation*}

Using partial summation, we have
\begin{equation*}
    \lim_{M\to\infty} \sum_{n=0}^{M} \frac{N_K(n+1)-N_K(n)}{1/4 + n^2} = \frac{N_K(M+1)}{1/4 + M^2} + \int_0^M \frac{2u}{(1/4 + u^2)^2} N_K(u+1)\, du.
\end{equation*}

For large $M$,
\begin{equation*}
    \frac{N_K(M+1)}{1/4 + M^2} \to 0,
\end{equation*}
because $N_K(T) \ll T\log T$. If $M$ is large, and $K=K_i$, using \eqref{numberofzeros} we have
\begin{align*}
    \int_0^M \frac{2u}{(1/4 + u^2)^2} N_{K_i}(u+1)\, du &= \left(\frac{1}{\pi} \int_0^M \frac{2u(u+1)}{(1/4 + u^2)^2}\, du  \right) (p_i-2)\log p_i + c (p_i-2)\log p_i \\
    &+  O\left((p_i-1) T\log T\right) + o(p_i\log p_i).
\end{align*}

Using
\begin{equation*}
    \frac{1}{\pi}\lim_{M\to\infty} \int_0^M \frac{2u^2}{(1/4 + u^2)^2}\, du =1,
\end{equation*}
 and 

\begin{equation*}
 \frac{1}{\pi}\lim_{M\to\infty} \int_0^M \frac{2u}{(1/4 + u^2)^2}\, du =\frac{4}{\pi},
\end{equation*}

we get
\begin{equation*}
    \int_0^M \frac{2u}{(1/4 + u^2)^2} N_{K_i}(u+1)\, du = \left( 1+\frac{4}{\pi} +c \right) (p_i-2)\log p_i +  O\left((p_i-1) T\log T\right) + o(p_i\log p_i).
\end{equation*}

Comparing this with \eqref{euler-kronecker-summation-formula}, we get
\begin{equation*}
    c \geq -\frac{4}{\pi}.
\end{equation*}

By a similar argument and using
\begin{equation*}
    \frac{1}{\pi}\lim_{M\to\infty} \int_1^M \frac{2u^2}{(1/4 + u^2)^2}\, du = \frac{1}{\pi} \left( \frac{4}{5} +\pi - 2\tan^{-1}(2)\right),
\end{equation*}
we get
\begin{equation*}
    c \leq \frac{1}{\pi} \left(2\tan^{-1}(2) -\frac{4}{5}\right).
\end{equation*}
This proves Proposition \ref{main-theorem-number-zeros}.\\

To obtain analogous result without the assumption of GRH, one should follow a similar argument as above, by replacing \eqref{1/rho} with $\sum 1/\rho = 1/2 (\sum 1/\rho + \sum 1/\overline{\rho})$. 

%However, this leads to strenuous computation and not as clean an interval for $c$ in the final result.
\bigskip
\bigskip

\section{\bf Concluding Remarks}
From Stark's lemma \ref{partial-summation-stark}, we have for any number field $K$,
\begin{equation*}
    \sum_{\rho} \frac{1}{\rho} = \gamma_K + \frac{1}{2} \log |d_K| - \frac{1}{2} r_1 (\gamma + \log 4\pi) - r_2 (\gamma + \log 2\pi) +1.
\end{equation*}

The sum $\sum_{\rho} 1/\rho$ can be interpreted in terms of the Li coefficient. Recall that the Li's coefficients are defined for $n\geq 1$ as
\begin{equation*}
    \lambda_n = \sum_{\rho} \left(1- \left(1-\frac{1}{\rho}\right)^n\right).
\end{equation*}
Li's criterion asserts that the Riemann hypothesis is true if and only if $\lambda_n$ is positive for all $n$. It is clear that
\begin{equation*}
    \lambda_1 = \sum_{\rho} \frac{1}{\rho}.
\end{equation*}
Thus, $\gamma_K$ also holds the information on the positivity of $\lambda_1$. Moreover, any estimate on $\gamma_K$ leads to an estimate on $\sum_{\rho} 1/\rho$. This observation could also be used to produce upper bounds for the low lying zeros of $\zeta_K(s)$.

\section*{Acknowledgments}
I thank Prof. M. R. Murty and Prof. V. K. Murty for helpful comments on an earlier version of this paper. I would also like to thank Prof. T. Trudgian for pointing out the best bound in equation \eqref{trudgian-bound} for asymptotically bad families. I thank both the anonymous referees for detailed comments.

\end{document}